\title{On Strassen's Theorem for support functions}
\author{Stefan Schrott and Daniel Toneian} 
\date{August 9, 2024}
\newtheorem{theorem}{Theorem}
\numberwithin{theorem}{section}
\newtheorem{lemma}[theorem]{Lemma}
\newtheorem{proposition}[theorem]{Proposition}
\newtheorem{corollary}[theorem]{Corollary}
\theoremstyle{definition}
\newtheorem{definition}[theorem]{Definition}
\newtheorem{remark}[theorem]{Remark}
\let\oldmarginpar\marginpar
\renewcommand\marginpar[1]{\-\oldmarginpar[\raggedleft\footnotesize #1]{\raggedright\footnotesize\color{red} #1}}
\renewcommand{\epsilon}{\varepsilon}
\renewcommand{\subset}{\subseteq}
\newcommand{\N}{\mathbb{N}}
\newcommand{\R}{\mathbb{R}}
\newcommand{\Sp}{\mathbb{S}}
\newcommand{\prob}{\mathcal{P}}
\newcommand{\F}{\mathcal{F}}
\newcommand{\M}{\mathcal{M}}
\newcommand{\W}{\mathcal{W}}
\newcommand{\pr}{\textup{pr}}
\newcommand{\cont}{\textup{cont}}
\newcommand{\dom}{\textup{dom}}
\newcommand{\phc}{\preceq_{\mathrm{s}}}
\newcommand{\cx}{\preceq_{\mathrm{c}}}
\newcommand{\eqph}{\equiv_{\mathrm{ph}}}
\newcommand{\ba}{\textup{{m}}}
\newcommand{\Qm}{\mathcal{Q}_\mathrm{m}}
\subjclass[2020]{60A10, 60G48, 49J55}
\keywords{Strassen's theorem, support functions, convex order}
\begin{document}

\maketitle

\begin{abstract}
Strassen \cite{St65} established that there exists a two step martingale with marginal distributions $\mu$, $\nu$ if and only if  $\mu$, $\nu$ are in convex order. 
	Recently Choné, Gozlan and Kramarz  \cite{ChGoKr23} obtained a transport characterization of the stochastic order defined by convex positively 1-homogeneous functions, in the spirit of Strassen’s theorem under certain technical assumptions. 
	In this note we prove the result of \cite{ChGoKr23} in full generality. We also observe that the restriction of the result to the case where $\mu, \nu$ are supported on a half space is equivalent to Strassen’s classical theorem.
\end{abstract}

\section{Introduction}
A martingale coupling between two probability measures $\mu,\nu \in \prob(\R^d)$ is a probability measure $\pi \in \prob(\R^d \times \R^d)$ with first marginal $\mu$ and second marginal $\nu$ such that the canonical process is a two-step martingale under $\pi$, or equivalently,  $x = \int y \, \pi^x(dy)$ $\mu$-a.s. for a disintegration $(\pi^x)_{x \in \R^d}$ of $\pi$ w.r.t.\ the first marginal. The probability measures $\mu,\nu \in \prob(\R^d)$ are said to be in convex order, denoted by $\mu \cx \nu$, if $\int f d\mu \le \int f d\nu$ for all convex $f : \R^d \to \R$. Strassen’s celebrated theorem connects these two concepts:
\begin{theorem}[Strassen's theorem]\label{thm:strassen}
	There is a martingale coupling between $\mu,\nu \in \prob(\R^d)$ if and only if $\mu \cx \nu$. 
\end{theorem}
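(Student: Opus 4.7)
The \emph{only if} direction is immediate from Jensen's inequality: given a martingale coupling $\pi$ with disintegration $(\pi^x)_{x \in \R^d}$, for any convex $f : \R^d \to \R$ one has $f(x) = f\bigl(\int y\, \pi^x(dy)\bigr) \le \int f(y)\, \pi^x(dy)$ for $\mu$-a.e.\ $x$, and integrating against $\mu$ yields $\int f\, d\mu \le \int f\, d\nu$.

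For the \emph{if} direction, my plan is a Hahn--Banach separation argument. I would first reduce to the case where $\mu$ and $\nu$ are supported inside a common compact convex set $K \subset \R^d$, via a truncation/tightness argument arranged so that the martingale property survives the limit. In this compact setting, the set $\M(\mu) \subset \prob(K \times K)$ of martingale measures with first marginal $\mu$ is convex and weakly compact, and its image $\mathcal{N}(\mu) \subset \prob(K)$ under the second-marginal map is likewise convex and weakly compact. The task reduces to showing $\nu \in \mathcal{N}(\mu)$.

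Suppose for contradiction that $\nu \notin \mathcal{N}(\mu)$. Hahn--Banach separation in $C(K)^*$ produces $f \in C(K)$ with $\int f\, d\nu > \sup_{\eta \in \mathcal{N}(\mu)} \int f\, d\eta$. The right-hand supremum rewrites as $\int \widehat{f}\, d\mu$, where $\widehat{f}$ denotes the concave envelope of $f$ on $K$: for each $x \in K$, maximizing $\int f\, d\rho$ over $\rho \in \prob(K)$ with barycenter $x$ yields $\widehat{f}(x)$ (a Choquet/Carath\'eodory-type fact), and a measurable selection then assembles the pointwise optima into an actual martingale measure attaining the supremum. Since $\widehat{f}$ is concave on $K$ with $\widehat{f} \ge f$, applying $\mu \cx \nu$ to the convex function $-\widehat{f}$ gives
\[
\int f\, d\nu \;\le\; \int \widehat{f}\, d\nu \;\le\; \int \widehat{f}\, d\mu,
\]
contradicting the strict separation.

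I expect the two main technical hurdles to be: (i) the identification of the supremum with $\int \widehat{f}\, d\mu$, which combines the pointwise Choquet statement with a measurable-selection argument to produce an optimal martingale kernel; and (ii) the reduction to the compactly supported case, which must be arranged so that both the convex order and the martingale constraint are stable under truncation and under extraction of a weak cluster point of the approximate couplings.
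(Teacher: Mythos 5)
Your plan is sound and, modulo the two hurdles you flag, would yield a correct proof; but it is a genuinely different route from the one taken here. In this paper Theorem~\ref{thm:strassen} is not proved from scratch: it is obtained in Section~4 as a corollary of Theorem~\ref{thm:Strassen_Gozlan}(b) (hence of Theorem~\ref{thm:main}), by embedding $\R^d$ into $\R^{d+1}$ via $\iota(x)=(x,1)$, translating $\mu\cx\nu$ into $\iota_\#\mu\phc\iota_\#\nu$ through Lemma~\ref{lemma:cx=phc}, and projecting the resulting moment-preserving kernel back down; Theorem~\ref{thm:main} itself is proved by adapting Strassen's functional-analytic argument to positively $1$-homogeneous envelopes. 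Your approach is instead the classical separation proof of Strassen's theorem: compactness and convexity of the set $\mathcal{N}(\mu)$ of reachable second marginals, Hahn--Banach, and the identification $\sup_{\eta\in\mathcal{N}(\mu)}\int f\,d\eta=\int \widehat{f}\,d\mu$ with the concave envelope $\widehat{f}$. Note that this identification (more precisely its hard half, ``$\ge$'', via Choquet/Carath\'eodory plus measurable selection) is exactly the content that Strassen packages into the integral representation of Proposition~\ref{prop:StrassenThm1}, and that the paper replaces, for finitely supported $\mu$, by the subdifferential sum rule. What your route buys is a self-contained, elementary argument in the compact case; what the paper's route buys is the exhibition of Strassen's theorem as the half-space instance of the support-function result, which is precisely the point of Section~4.

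Two steps of your plan deserve concrete attention, though neither is fatal. First, after separation you apply $\mu\cx\nu$ to $-\widehat{f}$; but $-\widehat{f}$ is convex only on $K$ and in general has no finite convex extension to $\R^d$ (the slopes of affine majorants of $f$ may blow up at the boundary of $K$), while the order $\cx$ is defined by testing against real-valued convex functions on $\R^d$. The fix is to write $\widehat{f}$ on $K$ as a decreasing limit of finite minima of affine majorants, each of which is concave and finite on all of $\R^d$, and pass to the limit; the same representation gives the upper semicontinuity/Borel measurability of $\widehat{f}$ that your measurable-selection step needs. Second, the reduction to compact support is the thinnest part of the plan: naive restriction and renormalization of $\mu$ and $\nu$ to large balls destroys the convex order, so the truncations must be constructed so as to stay in convex order (quantization by conditional barycenters as in Lemma~\ref{lemma:approximation} works on the $\mu$-side and gives $\mu_n\cx\mu$; the $\nu$-side is more delicate), and when extracting a weak cluster point of the approximate couplings you need uniform integrability of $|y|$ (available from convergence of first moments) to carry the barycenter constraint, not just the marginals, to the limit. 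With these points filled in, your argument is complete.
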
 



{In \cite{ChGoKr23}, Choné, Gozlan and Kramarz } consider a weak optimal transport problem for unnormalized kernels (see Definition~\ref{def:kernel} below). They establish a Kantorovich-type duality for this transport problem and as an application of this duality derive a Strassen-type theorem for the order on the set of positive measures with finite first moments $\M_1(\R^d)$ induced by testing against support functions.

\begin{definition}
	A function $f: \R^d \to \R$ is a support function if it is convex and positively 1\hbox{-}homogeneous, i.e.\  $f(t x) = t f(x)$ for all $x \in \R^d$ and $t\ge 0$. We write $\mu \phc \nu$ if $ \int f \, d\mu \le \int f \, d\nu$   for every support function  $f: \R^d \to \R$.
\end{definition}
Due to the one-to-one correspondence between support functions and convex bodies, support functions are a central tool in convex geometry. 
\begin{definition}\label{def:kernel}
	An unnormalized kernel $q= (q^x)_{x \in \R^d}$ is a collection of finite non-negative  measures such that the map $ x \mapsto q^x(A)$ is Borel measurable for every Borel set $A \subset \R^d$.
	
	A kernel $q$ transports $\mu$ to $\nu$ if  
	$$
	\int  \, q^x(dy)  \, \mu(dx)=   \nu(dy).
	$$
	The set of all kernels that transport $\mu$ to $\nu$ is denoted by $\mathcal{Q}(\mu,\nu)$.
	
	A kernel $q \in \mathcal{Q}(\mu,\nu)$ is called moment-preserving if $$\int y\, q^x(dy)=x$$for $\mu$-a.e.\ $x$. The collection of moment-preserving kernels from $\mu$ to $\nu$ is denoted by $\Qm(\mu,\nu)$. 
\end{definition}

{Choné, Gozlan and Kramarz} \cite[Theorem 5.2]{ChGoKr23} establish the following result:

\begin{theorem}\label{thm:Strassen_Gozlan}
	Let $\mu,\nu \in \M_1(\R^d)$ and suppose that $\nu$ is compactly supported.
	\begin{enumerate}[(a)]
		\item If $\mu$ is compactly supported as well then the following are equivalent:
		\begin{enumerate}[(i)]
			\item $\mu \phc \nu$
			\item $\Qm(\mu,\nu) \neq \emptyset$
		\end{enumerate} 
		\item The same conclusion holds if the convex hull of the support of {$\nu$} does not contain 0.
	\end{enumerate}
\end{theorem}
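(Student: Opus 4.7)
The easy direction (ii)$\Rightarrow$(i) is Jensen's inequality. Let $q \in \mathcal{Q}_{\mathrm{m}}(\mu,\nu)$ and let $f$ be a support function. For $\mu$-a.e.\ $x$, write $\alpha := q^x(\R^d)$; if $\alpha > 0$, then the probability measure $q^x/\alpha$ has mean $x/\alpha$, so Jensen combined with positive $1$-homogeneity yields $f(x) = \alpha f(x/\alpha) \le \alpha \int f\, d(q^x/\alpha) = \int f\, dq^x$. If $\alpha=0$, the moment-preserving identity forces $x=0$ and both sides vanish. Integrating against $\mu$ gives $\mu \phc \nu$.

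For the converse (i)$\Rightarrow$(ii), the plan is to reduce to the classical Strassen theorem. Testing $\mu \phc \nu$ against $\pm\langle v, \cdot \rangle$ for $v \in \R^d$ yields equality of the barycenters $m_\mu = m_\nu$, the compatibility required for any moment-preserving kernel. In case (b), Hahn--Banach produces $v \in \R^d$ and $c>0$ with $\langle v, x\rangle \ge c$ on $\mathrm{supp}(\mu)$, and I would perform a projective change of variables: let $T(x) := x/\langle v, x\rangle$ map the upper half-space $\{\langle v, \cdot\rangle>0\}$ onto the affine hyperplane $H := \{y : \langle v, y\rangle = 1\}$. Reweighting by the linear factor $\langle v, \cdot\rangle$ and pushing forward under $T$ produces a probability measure $\tilde\mu := \frac{1}{\langle v, m_\mu\rangle} T_{\#}(\langle v, \cdot\rangle\, \mu)$ on $H$, together with an analogous $\tilde\nu$ (augmented to account for $\nu$-mass in $\{\langle v, \cdot\rangle \le 0\}$). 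Via the correspondence between convex real-valued functions $g$ on $H$ and positively $1$-homogeneous convex functions $\phi_g(x) := \langle v, x\rangle\, g(x/\langle v, x\rangle)$ on the upper half-space, the hypothesis $\mu \phc \nu$ translates into $\tilde\mu \cx \tilde\nu$ on $H \cong \R^{d-1}$. Classical Strassen (Theorem~\ref{thm:strassen}) then provides a martingale coupling on $H$, which pulls back to an unnormalized moment-preserving kernel in $\mathcal{Q}_{\mathrm{m}}(\mu,\nu)$.

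For case (a), when both $\mu, \nu$ are compactly supported but $0$ may lie in $\mathrm{conv}(\mathrm{supp}(\mu))$, I would reduce to case (b) by decomposition. Compactness of $\mathrm{supp}(\mu)\setminus\{0\}$ combined with Hahn--Banach gives a finite cover by open half-spaces missing the origin, hence a measurable partition $\mu = \mu(\{0\})\,\delta_0 + \sum_i \mu_i$ with each $\mu_i$ satisfying the hypothesis of case (b). Allocating $\nu = \sum_i \nu_i$ compatibly with $\mu_i \phc \nu_i$ for each $i$, applying case (b) to each piece, and summing the resulting kernels produces the desired $q$. The main obstacle lies in case (b): the handling of $\nu$-mass in the half-space opposite to $\mathrm{supp}(\mu)$, where $T$ is undefined, requires a separate moment-preserving construction that balances the residual mass via the barycenter identity. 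In case (a), the secondary challenge is finding an allocation of $\nu$ among the pieces $(\mu_i)$ that respects the support-function ordering on each piece.
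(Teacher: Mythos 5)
The direction (ii)$\Rightarrow$(i) is fine. The problems are in (i)$\Rightarrow$(ii), and the most serious one is in your treatment of part (a). The step ``allocating $\nu=\sum_i\nu_i$ compatibly with $\mu_i\phc\nu_i$'' is not a secondary challenge but the entire content of the theorem: the order $\phc$ does not localize over a partition $\mu=\mu(\{0\})\delta_0+\sum_i\mu_i$, and the existence of such an allocation is essentially equivalent to what you are trying to prove (from $q\in\mathcal{Q}_{\textup{m}}(\mu,\nu)$ one reads off $\nu_i:=\int q^x\,\mu_i(dx)$, and conversely), so invoking it without a construction is circular. In addition, when $0\in\textup{supp}(\mu)$ the pieces adjacent to the origin cannot be arranged to satisfy the hypothesis of (b): any piece whose support accumulates at $0$ has $0$ in the convex hull of its support, and no finite family of half-spaces at positive distance from $0$ covers a punctured neighbourhood of the origin. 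The paper proves (a) by a completely different route, namely Strassen's functional-analytic method: for finitely supported $\mu$ one dominates $\nu$ by the support function $h=\sum_i\lambda_i h_{\omega_i}$ with $h_\omega(f)=\inf\{g(\omega):g\ge f,\ g \text{ concave, positively 1-homogeneous}\}$ and disintegrates via the subdifferential sum rule (Corollary~\ref{cor:replaceStrassen1}); general $\mu$ is then handled by approximating from below in $\phc$ (Lemma~\ref{lemma:approximation}) and passing to the limit through the barycentric cost, the gluing inequality and a Koml\'os-type compactness result (Propositions~\ref{prop:Qle_compact} and~\ref{prop:BarC}).

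Part (b) also has genuine gaps beyond what you acknowledge. First, since $\mu$ need not be compactly supported there, $0\notin\textup{conv}(\textup{supp}(\mu))$ does not give strict separation: e.g.\ for $\textup{supp}(\mu)=\{(1,0)\}\cup\{(t,(1+t^2)^{-1}):t\in\R\}$ the only separating direction is $v=(0,1)$, and $\langle v,\cdot\rangle$ vanishes at the atom $(1,0)$, so your chart $T(x)=x/\langle v,x\rangle$ need not be defined $\mu$-a.e. Second, and more importantly, $\nu$ may genuinely charge the opposite half-space: in $d=1$, $\mu=\delta_1\phc\nu=2\delta_1+\delta_{-1}$ (test with $f(x)=ax^+-bx^-$, $a\ge b$), yet any $q\in\mathcal{Q}_{\textup{m}}(\mu,\nu)$ must send mass from $x=1$ to $y=-1$; such a kernel is not the pull-back of any martingale coupling on the hyperplane $H$, and merely ``balancing the residual mass via the barycenter identity'' is not a construction --- the residual mass cannot be moved into the half-space within the $\eqph$-class, so this is exactly where your argument stops. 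Where your reduction does apply, i.e.\ when both measures can be carried to the affine hyperplane, it coincides with the paper's Section~4 argument (Lemmas~\ref{lemma:halfspace} and~\ref{lemma:cx=phc}), which the paper presents as an equivalence between Theorem~\ref{thm:strassen} and Theorem~\ref{thm:Strassen_Gozlan}(b); the unconditional proof of both (a) and (b) in the paper is the functional-analytic one of Section~3.
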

The main result of this note is to establish this in full generality: 

\begin{theorem}\label{thm:main}
	Let $\mu, \nu \in \M_1(\R^d)$. Then the following are equivalent:
	\begin{enumerate}[(i)]
		\item $\mu \phc \nu$ 
		\item $\Qm(\mu,\nu) \neq \emptyset$
	\end{enumerate}
\end{theorem}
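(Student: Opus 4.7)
The plan for $(ii) \Rightarrow (i)$ is a direct application of Jensen's inequality, with the positive $1$-homogeneity of support functions compensating for the fact that $q^x$ need not be a probability measure. Writing $c := q^x(\R^d)$, when $c > 0$ one has $\int f\,dq^x = c \int f\,d(q^x/c) \ge c f(x/c) = f(x)$; when $c = 0$, moment preservation forces $x = 0$ and both sides vanish. Integrating against $\mu$ yields $\int f\,d\mu \le \int f\,d\nu$.

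For $(i) \Rightarrow (ii)$, my plan is to reduce to Theorem~\ref{thm:Strassen_Gozlan}(a) by a \emph{radial contraction} that compactifies supports without disturbing the $\phc$-order. For $n \in \N$, set
\[
r_n^x := \delta_x \text{ if } \|x\| \le n,\qquad r_n^x := \tfrac{\|x\|}{n}\,\delta_{n x/\|x\|} \text{ if } \|x\| > n.
\]
Direct checks confirm that $r_n$ is a moment-preserving kernel, and positive $1$-homogeneity of support functions gives the key identity $\int f\,dr_n^x = f(x)$ for every support function $f$ and every $x$. Setting $\tilde\mu_n := \int r_n^x\,d\mu(x)$ and $\tilde\nu_n := \int r_n^x\,d\nu(x)$, both measures are compactly supported in $\overline{B}_n$ and satisfy $\int f\,d\tilde\mu_n = \int f\,d\mu$ and $\int f\,d\tilde\nu_n = \int f\,d\nu$. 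Hence $\mu \phc \nu$ implies $\tilde\mu_n \phc \tilde\nu_n$, and Theorem~\ref{thm:Strassen_Gozlan}(a) yields $\tilde q_n \in \mathcal{Q}_m(\tilde\mu_n, \tilde\nu_n)$.

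It remains to invert the radial contraction on the $\nu$-side. Using the polar disintegration $\nu|_{\R^d\setminus\{0\}} = \int_{\Sp^{d-1}} \int_0^\infty \delta_{r\theta}\,\nu_\theta(dr)\,\kappa(d\theta)$ obtained from the Polish disintegration theorem applied to $x \mapsto x/\|x\|$, I will define a moment-preserving kernel $\bar r_n$ from $\tilde\nu_n$ to $\nu$ by $\bar r_n^{y'} := \delta_{y'}$ for $y' \in \mathrm{int}(B_n)$ and
\[
\bar r_n^{n\theta} := \tfrac{1}{m_\theta}\int_n^\infty \delta_{r\theta}\,\nu_\theta(dr),\qquad m_\theta := \tfrac{1}{n}\int_n^\infty r\,\nu_\theta(dr),
\]
for $n\theta \in \partial B_n$ with $m_\theta > 0$ (and $\bar r_n^{n\theta} := \delta_{n\theta}$ when $m_\theta = 0$, a case that does not contribute to $\tilde\nu_n$). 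Straightforward computations verify $\int y\,d\bar r_n^{y'}(y) = y'$ and $\int \bar r_n^{y'}\,d\tilde\nu_n(y') = \nu$. Since composition of moment-preserving kernels is moment-preserving, the kernel $q := \bar r_n \circ \tilde q_n \circ r_n$ lies in $\mathcal{Q}_m(\mu, \nu)$, completing the proof.

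The main technical obstacle I anticipate is the measurable construction of $\bar r_n$, which relies on the Polish disintegration theorem and requires care to handle the zero-mass case $m_\theta = 0$ and to verify moment preservation of the reconstructed kernel. The finite first-moment hypothesis $\mu,\nu \in \mathcal{M}_1(\R^d)$ enters essentially here to guarantee that the quantities $m_\theta$ are $\kappa$-a.e.\ finite. Once $\bar r_n$ is in place, the argument is notably clean: a single application of the radial contraction with any fixed $n \ge 1$ suffices, and no limiting procedure is required.
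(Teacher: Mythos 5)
Your proof is correct but follows a genuinely different route from the paper's own. For $(ii)\Rightarrow(i)$ you are actually more careful than the paper: the kernels $q^x$ are unnormalized, and the normalization $c=q^x(\R^d)$ combined with positive $1$-homogeneity is exactly what makes the Jensen step legitimate; the paper applies Jensen without comment. For $(i)\Rightarrow(ii)$, the paper first reduces both measures to the unit sphere via the homogeneous marginal $\mu_\Sp$ (Proposition~\ref{prop:sphere}) and then runs a self-contained functional-analytic argument: it proves the result for finitely supported $\mu$ (Proposition~\ref{prop:mu_endlich}), introduces barycentric costs, establishes a Komlós-type compactness and attainment result (Propositions~\ref{prop:Qle_compact} and~\ref{prop:BarC}), and extends by approximating $\mu$ with finitely supported measures. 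You instead compactify both supports by a radial contraction $r_n$, invoke Theorem~\ref{thm:Strassen_Gozlan}(a) as a black box, and invert the contraction on the $\nu$-side via a polar disintegration, then glue. Your route is shorter and corresponds exactly to the alternative the authors mention in the introduction (the Section~2 results together with the Choné--Gozlan--Kramarz proof of Theorem~\ref{thm:Strassen_Gozlan}(a) already give Theorem~\ref{thm:main}); what the paper's longer route buys is independence from the Choné--Gozlan--Kramarz duality machinery, giving a fully self-contained argument. Two small points you should make explicit: with your convention $r_n^x=\delta_x$ for $\|x\|\le n$, the quantities $m_\theta$ and $\bar r_n^{n\theta}$ must be read with the closed interval $[n,\infty)$ so that $\nu$-mass on $\partial B_n$ (if any) is handled consistently (alternatively, choose $n$ with $\nu(\partial B_n)=0$); and the $\kappa$-null set where the disintegration fails or $m_\theta=\infty$ must be discarded, which, as you note, the finite first moment of $\nu$ permits.
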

In contrast to the proof given {in} \cite{ChGoKr23}, our proof of Theorem~\ref{thm:main} does not use transport duality, but is based on a combination of explicit geometric constructions and functional analytic arguments which are inspired by Strassen \cite{St65}. 

\subsection*{Related literature}
The convex order and 
Strassen's theorem play an important role in different developments in probability, e.g.\ the existence of continuous (Markov-) martingales with given marginals \cite{Ke72, Lo08b, PaRoSc22}, the PCOC-problem \cite{HiYo10, HiPrRoYo11}, the Skorokhod embedding problem, which  has a solution if and only if initial and terminal laws are in convex order \cite{Ob04, BeCoHu14}, and the martingale transport problem   \cite{HoNe12, BeHePe12, GaHeTo13, BeNuTo16}.  

Weak optimal transport \cite{GoRoSaTe14} as well as its version for unnormalized kernels \cite{ChGoKr23} present extensions of classical transport (see \cite{Vi09, Sa15, FiGl21} for recent monographs) which capture several applications which are beyond the usual framework, see \cite{BaPa19} for an overview. Also, the notion of barycentric costs considered in Section~\ref{sec:general_measures} below arose in the context of weak transport theory, see \cite{GoJu18}.

\subsection*{Organisation of the paper and further results}

In Section~2 we present a characterization of the equivalence relation induced by $\phc$ (see Proposition~\ref{prop:char_ph_equiv}). As every class of this relation has a member that is compactly supported, these considerations imply that it suffices to show  Theorem~\ref{thm:main} for compactly supported measures. 

In Section~3 we give a new proof of Theorem~\ref{thm:Strassen_Gozlan}(a), which is inspired by the approach used in Strassen's original paper \cite{St65}. We note that the results of Section~2 together with the proof of  Theorem~\ref{thm:Strassen_Gozlan}(a) given in \cite{ChGoKr23} already suffice to prove Theorem~\ref{thm:main}.

In Section~4 we note the equivalence of Theorem~\ref{thm:strassen} and Theorem~\ref{thm:Strassen_Gozlan}(b) in the sense that one can be proven from the other by simple projection/embedding-arguments. In this sense Theorem~\ref{thm:main}  is indeed a generalization of Strassen's theorem.

\section{Properties of $\phc$}\label{sec:2}

The relation $\phc$ is not a partial order, but only a preorder as it lacks antisymmetry. Hence, it is natural to consider the equivalence relation which is induced by this preorder, i.e.\  $\mu$ and $\nu$ are equivalent if $\mu \phc \nu$ and $\nu \phc  \mu$. It turns out that this equivalence relation is given by $\eqph$ (see Definition~\ref{def:pheq}) and that a `symmetrized version' of Theorem~\ref{thm:main} can be proven much easier than  Theorem~\ref{thm:main} itself (see Proposition~\ref{prop:char_ph_equiv}). From this it follows quickly that it suffices to prove Theorem~\ref{thm:main} for measures that are concentrated on the unit-sphere $\Sp^{d-1}$, showing that compactness assumptions on the supports of the measures (as in Theorem~\ref{thm:Strassen_Gozlan}) are not needed. 


If a positively 1-homogeneous function $f$ is bounded on the unit sphere $\Sp^{d-1}$, there is a constant $c$ such that $|f(x)| \le c|x|$ for all $x \in \R^d$, so integration against measures in $\mathcal{M}_1(\R^d)$ is well-defined. As $\Sp^{d-1}$ is compact, the same is true for all continuous positively 1-homogeneous functions and in particular for all {real valued} support functions. 

{
We will sometimes allow convex functions to attain the value $+ \infty$ and always indicate such instances explicitly. As such functions are bounded from below by an affine function, integration against measures in $\M_1(\R^d)$ is still well-defined in the sense that the integral can be $+\infty$.} 
{
\begin{remark}\label{remark:extended_values}
	 Given $\mu \phc \nu \in \M_1(\R^d)$ and a lower semi-continuous (l.s.c.) support function $f: \R^d \to \R \cup \{+\infty\}$, we have $\int f \, d\mu \leq \int f \, d\nu$ (where both sides of the inequality could be  $+\infty$). This follows from monotone convergence because every l.s.c.\ support function $f: \R^d \to \R \cup \{+ \infty\}$ is an increasing limit of finite maxima of linear functions.
\end{remark}
}

\begin{definition}\label{def:pheq}
	Let $\mu, \nu \in \mathcal{M}_1(\R^d)$. We write $\mu \eqph \nu$ {for the equivalence relation induced by  positively 1-homogeneous functions, i.e.\  $\mu \eqph \nu$ if $\int f \,d\mu = \int f \,d\nu$} for every positively 1-homogeneous Borel function $f:\R^d \to \R$ that is bounded on $\Sp^{d-1}$.
\end{definition}
The following standard fact from convex geometry can be found in \cite[Lemma~1.7.8]{Schneider1993}. It allows us to easily identify $\eqph$ as the equivalence relation induced by the preorder $\phc$. 
\begin{lemma}\label{lemma:C2_supfct}
	Let $f \in C^2(\R^d \setminus \{0\})$ be positively 1-homogeneous. Then there exists a constant $c>0$ so that $g(x):= f(x) +c|x|$ is a support function.
\end{lemma}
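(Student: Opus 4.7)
The positive 1-homogeneity of $g = f + c|\cdot|$ is automatic for any $c$, so the task reduces to choosing $c > 0$ such that $g$ is convex. The plan is to analyze the Hessian of $g$ on $\R^d \setminus \{0\}$ and then upgrade pointwise positive semidefiniteness to convexity on all of $\R^d$ via a supplementary subadditivity bound at antipodal points.

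First, I would exploit homogeneity to reduce everything to the unit sphere. Differentiating the identity $f(tx) = tf(x)$ twice in $x$ gives the scaling $D^2 f(tx) = t^{-1} D^2 f(x)$, while differentiating $\nabla f(tx) = \nabla f(x)$ in $t$ at $t=1$ yields the kernel identity $D^2 f(x)\,x = 0$. The same identities hold for $h(x) = |x|$, and a direct computation gives $D^2 h(\xi) = I - \xi\xi^T$ on $\Sp^{d-1}$, i.e.\ the orthogonal projection onto the tangent space $T_\xi \Sp^{d-1}$. By continuity of $D^2 f$ and compactness of $\Sp^{d-1}$, there exists $M > 0$ with $\langle D^2 f(\xi) v, v\rangle \geq -M|v|^2$ for all $\xi \in \Sp^{d-1}$, $v \in \R^d$.

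Next, for $\xi \in \Sp^{d-1}$ and a decomposition $v = \alpha\xi + w$ with $w \perp \xi$, the kernel identity together with symmetry of the Hessian collapses $\langle D^2 f(\xi) v, v\rangle$ to $\langle D^2 f(\xi) w, w\rangle \geq -M|w|^2$, while $\langle D^2 h(\xi) v, v\rangle = |w|^2$. Taking $c \geq M$, the Hessian of $g$ is positive semidefinite on $\Sp^{d-1}$, and the scaling identity propagates this to every point of $\R^d \setminus \{0\}$. Consequently, $g$ is convex when restricted to any convex subset of $\R^d \setminus \{0\}$.

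I expect the last step to be the main obstacle: extending convexity from convex subsets of $\R^d \setminus \{0\}$ to all of $\R^d$. The set $\R^d \setminus \{0\}$ is itself not convex, and the line segments $[x,y]$ not covered by the previous step are exactly those passing through the origin, i.e.\ $y = -\lambda x$ for some $\lambda > 0$. Writing out the convexity inequality along such a segment and using positive 1-homogeneity, one sees that it reduces precisely to the antipodal bound $g(x) + g(-x) \geq 0$, which on $\Sp^{d-1}$ reads $f(\xi) + f(-\xi) + 2c \geq 0$. Enlarging $c$ further if necessary (possible since $f$ is continuous on the compact sphere) secures this. With both conditions in place, $g$ is continuous, positively 1-homogeneous and convex, i.e.\ a support function.
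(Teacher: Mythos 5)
Your proof is correct. Note that the paper does not prove this lemma at all: it is quoted as a standard fact with a reference to \cite[Lemma~1.7.8]{Schneider1993}, and the argument given there is essentially the one you reconstructed --- use the Euler-type identities $D^2f(tx)=t^{-1}D^2f(x)$ and $D^2f(x)x=0$, bound the tangential part of $D^2f$ from below on the compact sphere $\Sp^{d-1}$, and absorb it by $c\,D^2|\cdot|(\xi)=c(I-\xi\xi^T)$. Your computations in that part are all sound. The extra care you take with segments through the origin is a genuine (and correct) addition: pointwise positive semidefiniteness of the Hessian on $\R^d\setminus\{0\}$ only gives convexity along segments avoiding $0$, and your reduction of the remaining case to $g(\xi)+g(-\xi)\ge 0$ is exactly right; this condition is indispensable when $d=1$ (where the Hessian condition is vacuous), while for $d\ge 2$ it is in fact automatic, since a segment through $0$ can be approximated by nearby segments missing $0$ and the convexity inequality passes to the limit by continuity of $g$ (which is continuous at $0$ by $1$-homogeneity and boundedness on $\Sp^{d-1}$). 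Two cosmetic remarks: enlarging $c$ for the antipodal bound does not disturb the Hessian estimate, as you implicitly use, since $D^2|\cdot|$ is positive semidefinite; and the case of a segment with one endpoint equal to $0$, which is not of the form $y=-\lambda x$ with $\lambda>0$, should be mentioned, though it holds with equality directly from $g(0)=0$ and homogeneity.
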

\begin{corollary}\label{cor:ph=phc}
	Let $\mu,\nu \in \M_1(\R^d)$. Then $\mu \eqph \nu$ if and only if $\mu \phc \nu$ and $\nu \phc \mu$.
\end{corollary}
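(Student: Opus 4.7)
\medskip
\noindent\textbf{Proof plan for Corollary~\ref{cor:ph=phc}.}
One direction is immediate: every support function is convex, hence continuous on $\R^d$, hence bounded on $\Sp^{d-1}$, and is positively 1-homogeneous by definition, so $\mu \eqph \nu$ trivially implies $\mu \phc \nu$ and $\nu \phc \mu$. For the converse my plan is to transfer the statement to the unit sphere and reduce it to a uniqueness question handled by the Riesz--Markov theorem.

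To each $\rho \in \M_1(\R^d)$ I associate the finite positive Borel measure $\tilde\rho$ on $\Sp^{d-1}$ defined by
\[
\tilde\rho(A) := \int_{\R^d \setminus \{0\}} |x|\,\mathbf{1}_A(x/|x|)\,d\rho(x), \qquad A \subset \Sp^{d-1} \text{ Borel}.
\]
Its total mass $\int |x|\,d\rho(x)$ is finite since $\rho \in \M_1(\R^d)$, and using $f(0)=0$ together with $f(x)=|x|\,f(x/|x|)$ for $x\neq 0$, a direct computation gives
\[
\int_{\R^d} f\,d\rho \;=\; \int_{\Sp^{d-1}} f|_{\Sp^{d-1}}\,d\tilde\rho
\]
for every positively 1-homogeneous Borel function $f$ bounded on $\Sp^{d-1}$. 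Under this correspondence, the hypothesis $\mu \phc \nu$ and $\nu \phc \mu$ becomes $\int f\,d\tilde\mu = \int f\,d\tilde\nu$ for every support function $f$, and the desired conclusion $\mu \eqph \nu$ becomes $\tilde\mu = \tilde\nu$ as finite Borel measures on $\Sp^{d-1}$.

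The remaining task is therefore to enlarge the class of admissible test functions until it separates finite Borel measures on $\Sp^{d-1}$. By Lemma~\ref{lemma:C2_supfct}, any positively 1-homogeneous $f \in C^2(\R^d \setminus \{0\})$ can be written as $g - c|\cdot|$ with $g$ a support function and $c>0$; since $|\cdot|$ is itself a support function, linearity yields $\int f\,d\tilde\mu = \int f\,d\tilde\nu$ for all such $f$. Every $h \in C^2(\Sp^{d-1})$ extends to a positively 1-homogeneous $C^2$ function on $\R^d \setminus \{0\}$ via $x \mapsto |x|\, h(x/|x|)$, so the equality holds for all $h \in C^2(\Sp^{d-1})$. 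A standard mollification argument in local charts of the compact manifold $\Sp^{d-1}$ (via a smooth partition of unity) shows that $C^2(\Sp^{d-1})$ is uniformly dense in $C(\Sp^{d-1})$; Riesz--Markov then forces $\tilde\mu = \tilde\nu$.

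The conceptually nontrivial step is the passage from the narrow class of support functions to a class rich enough to separate measures; this is precisely where Lemma~\ref{lemma:C2_supfct} does the work, by allowing us to access arbitrary $C^2$ positively 1-homogeneous functions as differences of two support functions. Once this is done, only standard approximation on the compact smooth manifold $\Sp^{d-1}$ and an application of Riesz--Markov remain.
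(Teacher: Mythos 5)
Your proposal is correct and follows essentially the same route as the paper: the trivial direction is handled identically, and the substantive direction hinges, exactly as in the paper, on Lemma~\ref{lemma:C2_supfct} to write every positively 1-homogeneous $C^2(\R^d\setminus\{0\})$ function as a difference of support functions. The only difference is that you spell out the paper's ``standard approximation arguments'' explicitly --- passing to the induced measures on $\Sp^{d-1}$ (the paper's homogeneous marginals), using density of $C^2(\Sp^{d-1})$ in $C(\Sp^{d-1})$, and invoking Riesz--Markov --- which is a correct and welcome elaboration rather than a different method.
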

\begin{proof}
	Assume that $\mu \phc \nu$ and $\nu \phc \mu$. We then have $\int f \, d\mu = \int f \, d\nu$ for every support function $f: \R^d \to \R$. By Lemma~\ref{lemma:C2_supfct} every positively 1-homogeneous $g \in C_2(\R^d \setminus  \{0\})$ is a difference of two support functions, so we have $\int g \,d\mu = \int g \,d \nu$ as well. By standard approximation arguments, the equality carries over to all positively 1-homogeneous functions, thus  $\mu \eqph \nu$. The converse implication is trivial.
\end{proof}
There is a one-to-one correspondence between positively 1-homogeneous functions on $\R^d$ and functions on the unit sphere $\Sp^{d-1}$: Given a function $f: \Sp^{d-1} \to \R$ we can extend it in a unique way to a positively 1-homogeneous function $\bar f : \R^d \to \R$ by setting
\begin{align}\label{eq:homogen_ext}
	\bar f(x) = \begin{cases} 
		|x| f\big(  \frac{x}{|x|} \big)  & x \neq 0,\\
		0 & x=0.
	\end{cases}
\end{align}
Then the restriction of $\bar f$ to $\Sp^{d-1}$ equals $f$. {Using this {one-to-one correspondence}, it is common to consider support functions either as functions on $\R^d$ or as functions on $\mathbb{S}^{d-1}$, and both notions are used interchangeably.}

{The observation above implies} that every $\eqph$-class contains exactly one canonical representative that is concentrated on the unit sphere:

\begin{definition}\label{def:homogmarg}
	Let $\mu \in \M_1(\R^d)$. We define its homogeneous marginal  $\mu_\Sp \in \M_1(\Sp^{d-1})$ via
	$$
	\int_{\Sp^{d-1}} f(u) \, \mu_\Sp(du) = \int_{\R^d} \bar f(x)  \,\mu(dx) \qquad f : \Sp^{d-1} \to \R, \text{ Borel}
	$$
	where $\bar f$ is defined as in \eqref{eq:homogen_ext}.
\end{definition}
{The total mass of $\mu_\Sp$  equals the first absolute moment of $\mu$, in particular, $\mu_\Sp$ is a finite measure.} Clearly, $\mu \eqph \mu_\Sp$, i.e.\  $\mu_\Sp$ is indeed a representative of the $\eqph$-class of $\mu$. 

In order to keep  notation short, we {abbreviate the first moment of a measure $\mu$ by $\ba(\mu)$, i.e.}
\begin{align}
	\ba(\mu):= \int x\, \mu(dx)  .
\end{align}

\begin{proposition}\label{prop:sphere}
	Let $\mu \in \M_1(\R^d)$, $\mu \neq c \delta_0$ for every $c >0$. Then $\Qm(\mu,\mu_\Sp)$ and $\Qm(\mu_\Sp,\mu)$ are both non-empty. 
\end{proposition}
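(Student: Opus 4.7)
The plan is to write down explicit moment-preserving kernels in both directions, both built from the polar map $\pi(x) := x/|x|$.

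For $\mathcal{Q}_{\textup{m}}(\mu, \mu_\Sp)$, I would just set $q^x := |x|\,\delta_{x/|x|}$ for $x \neq 0$ and $q^0 := 0$. Unravelling the definition of $\mu_\Sp$ immediately gives that this kernel transports $\mu$ to $\mu_\Sp$, and the barycentre computation $\int y\, q^x(dy) = |x|\cdot(x/|x|) = x$ is immediate. This direction requires no hypothesis on $\mu$ whatsoever.

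For $\mathcal{Q}_{\textup{m}}(\mu_\Sp, \mu)$ I would first split off the atom of $\mu$ at the origin: set $c := \mu(\{0\})$ and $\mu' := \mu|_{\R^d \setminus \{0\}}$, and disintegrate $\mu'$ along $\pi$ to obtain a measurable family $(\rho_u)_{u \in \Sp^{d-1}}$ of probability measures on $(0,\infty)$ with
\[
\int f\, d\mu' = \int_{\Sp^{d-1}} \int_0^\infty f(tu)\, \rho_u(dt)\, \tilde\mu(du),
\]
where $\tilde\mu := \pi_* \mu'$. Setting $m(u) := \int_0^\infty t\, \rho_u(dt)$ and plugging homogeneous extensions $\bar g$ into the display identifies $\mu_\Sp$ with the measure $m \cdot \tilde\mu$; in particular $m$ is $\tilde\mu$-a.e.\ finite and positive. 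The assumption $\mu \neq c\delta_0$ then guarantees $M := \mu_\Sp(\Sp^{d-1}) > 0$, so I can define
\[
q^u := \frac{1}{m(u)}\, (t\mapsto tu)_*\rho_u \;+\; \frac{c}{M}\,\delta_0 \qquad (u \in \Sp^{d-1}).
\]
The first summand has barycentre $u$ and the second has barycentre $0$, yielding the mean condition; integrating $q^u$ against $\mu_\Sp(du) = m(u)\,\tilde\mu(du)$ will recover $\mu'$ from the first summand and $c\,\delta_0$ from the second, giving back all of $\mu$.

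The one genuine obstacle is the possible atom of $\mu$ at the origin: the passage to $\mu_\Sp$ discards it, so reconstructing $\mu$ requires manufacturing mass $c$ at $0$ out of mass on the sphere, which is only possible when $M > 0$. This is precisely the role of the exclusion $\mu \neq c\delta_0$---without it, $\mu_\Sp$ vanishes and no such correction can be made.
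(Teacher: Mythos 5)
Your proof is correct and follows essentially the same approach as the paper: the same kernel $|x|\,\delta_{x/|x|}$ for the forward direction, and for the reverse direction a disintegration along the radial projection followed by normalization by the conditional first moment $m(u)=|\ba(\mu^u)|$ plus a $\delta_0$-correction term with weight $\mu(\{0\})/\mu_\Sp(\Sp^{d-1})$. The only cosmetic difference is that you carry out the disintegration in polar coordinates (measures $\rho_u$ on $(0,\infty)$) rather than keeping the fibre measures on $\R^d$; this does not change the argument.
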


\begin{proof}
	It is easy to check that the kernel $p$ defined by $p^x:=|x| \, \delta_{x/|x|}$ for $x\neq 0$ and $p^0:=0$ is in $\Qm(\mu,\mu_\Sp)$. Next, we construct $q \in \Qm(\mu_\Sp,\mu)$. 
	{To do this, consider the map $u(x):=\frac{x}{|x|}$ and let $\bar \mu := u_\#\mu = \mu \circ u^{-1} \in \mathcal{M}(\Sp^{d-1})$ be the pushforward of $\mu$ under the map $u$. By the disintegration theorem  (see e.g.\ \cite[Exercise~17.35]{Ke95}) there is a kernel $(\mu^u)_{u \in \Sp^{d-1}}$ such that $\mu^u \in \prob(\R^d)$ is supported on the ray $\{tu : t > 0\}$ for $\bar \mu$-a.e.\ $u$ and such that for every Borel $f: \R^d \to \R$   } 
	\begin{align}\label{eq:disintprf}
		\int_{\R^d \setminus \{0 \}} f(x) \, \mu(dx) = \iint  f(x) \, \mu^{u}(dx) \bar{\mu}(du).   
	\end{align}
	Using this we define $q$ by 
	$$q^u:=\frac{1}{|\ba(\mu^{u})|}\mu^{u}+\frac{\mu(\{0\})}{\mu_\Sp(\Sp^{d-1})} \delta_0 
	$$
	for $u \in \Sp^{d-1}$. It is easy to see that $\ba(q^u)=u$. For every Borel $f : \Sp^{d-1} \to \R$ we have 
	$$
	\int f(u) |\ba(\mu^u)| \,\bar \mu(du)= \int f(u) \int |y|\, \mu^u(dy) \bar{\mu}(du) = \int f(u(x)) |x|\, \mu(dx) = \int f(u)\, \mu_\Sp(du),
	$$
	where the last equality is just the definition of $\mu_\Sp$. Hence, $\bar\mu(du)= \frac{1}{|\ba(\mu^u)|} \mu_\Sp(du)$. Using this and \eqref{eq:disintprf} we find for every bounded Borel $g: \R^d \to \R$ 
	\[
	\int g(x) \, q^u(dx) \mu_\Sp(du)= \iint g(x)  \mu^u(dx) \frac{1}{|\ba(\mu^u)|}\mu_\Sp(du)   + g(0)\mu({\{ }0{\}}) = \int g(x) \, \mu(dx). \qedhere
	\]

\end{proof}

\begin{remark}
	Note that $\mu_\Sp = 0$ if and only if $\mu = c \delta_0$ for some $c>0$. As it is not possible to transport the 0 measure to any non-zero measure, our constructions fail for such $\mu$. However, Theorem~\ref{thm:main} is still valid in that case and trivial to prove: {Assume that $\mu=c\delta_0 \phc \nu$. By testing against the linear functions $x \mapsto x_i$ and $x \mapsto -x_i$ for $1 \le i \le d$, we obtain that  $\int y \,\nu(dy)=0$. Hence,}   the kernel defined via $p^0=\frac{1}{c}\nu$ is in $\Qm(\mu,\nu)$. In order to avoid tedious case distinctions, we will exclude the special case $\mu = c \delta_0$ further on.
\end{remark}

\begin{lemma}[Gluing] \label{lemma:gluing}
	Let $\mu,\nu,\rho \in \M_1(\R^d)$ and kernels 
	$p \in \mathcal{Q}(\mu,\nu)$ and $q \in \mathcal{Q}(\nu,\rho)$ {be} given. Then the kernel $r$ defined by $r^x= \int q^y p^x(dy)$ is in  $\mathcal{Q}(\mu,\rho)$ and satisfies
	\begin{align}\label{eq:gluingineq}
		\int \left| x-  \ba(r^x) \right| \, \mu(dx) \leq \int \left| x- \ba(p^x)  \right| \, \mu(dx) + \int \left| y- \ba(q^y) \right| \, \nu(dy) .   
	\end{align}   
	In particular, if $p \in \Qm(\mu,\nu)$ and $q \in  \Qm(\nu,\rho)$, we have $r\in  \Qm(\mu,\rho)$ as well.
\end{lemma}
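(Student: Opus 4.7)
The plan is to verify the three things in order: that $r$ is a well-defined kernel transporting $\mu$ to $\rho$, that the barycenter inequality \eqref{eq:gluingineq} holds, and finally to read off the moment-preserving case as an immediate corollary.

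First I would observe that $r^x$ is a well-defined non-negative finite measure for every $x$, with total mass $\int q^y(\R^d)\, p^x(dy)$, and that $x \mapsto r^x(A) = \int q^y(A)\, p^x(dy)$ is Borel measurable by a monotone class argument (Borel measurability in $y$ of $q^y(A)$ is part of the kernel definition, and integration against $p^x$ preserves measurability in $x$). To see $r \in \mathcal{Q}(\mu,\rho)$ I would integrate: for any Borel $A \subset \R^d$,
\begin{align*}
\int r^x(A)\, \mu(dx) = \iint q^y(A)\, p^x(dy)\, \mu(dx) = \int q^y(A)\, \nu(dy) = \rho(A),
\end{align*}
using $p \in \mathcal{Q}(\mu,\nu)$ in the middle step and $q \in \mathcal{Q}(\nu,\rho)$ at the end.

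The key computation for \eqref{eq:gluingineq} is an explicit formula for the barycenter of $r^x$. By Fubini,
\begin{align*}
\ba(r^x) = \int z\, r^x(dz) = \iint z\, q^y(dz)\, p^x(dy) = \int \ba(q^y)\, p^x(dy).
\end{align*}
Now I would write $x = \ba(p^x) + (x - \ba(p^x))$ and $\int \ba(q^y)\, p^x(dy) = \int y\, p^x(dy) + \int (\ba(q^y)-y)\, p^x(dy) = \ba(p^x) + \int (\ba(q^y)-y)\, p^x(dy)$, so that
\begin{align*}
\bigl| x - \ba(r^x) \bigr| \le \bigl| x - \ba(p^x) \bigr| + \int \bigl| y - \ba(q^y) \bigr|\, p^x(dy)
\end{align*}
by the triangle inequality applied pointwise and then moved inside the integral by Jensen. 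Integrating against $\mu$ and again invoking $p \in \mathcal{Q}(\mu,\nu)$ on the double integral yields \eqref{eq:gluingineq}.

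For the in-particular statement, if $p$ and $q$ are moment-preserving, both terms on the right side of \eqref{eq:gluingineq} vanish, so $\ba(r^x)=x$ for $\mu$-a.e.\ $x$ and hence $r \in \mathcal{Q}_{\textup{m}}(\mu,\rho)$. There is no real obstacle; the only point requiring a bit of care is the measurability of $x \mapsto r^x(A)$ and of $x \mapsto \ba(r^x)$, which are routine Fubini–monotone class arguments given the measurability built into the definition of a kernel.
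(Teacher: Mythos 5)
Your proposal is correct and follows essentially the same route as the paper: verify $r\in\mathcal{Q}(\mu,\rho)$ by a direct Fubini computation, then insert $\ba(p^x)$ between $x$ and $\ba(r^x)=\int \ba(q^y)\,p^x(dy)$ and apply the triangle inequality before integrating against $\mu$. One small remark on wording: the step $\bigl|\int(\ba(q^y)-y)\,p^x(dy)\bigr|\le\int|\ba(q^y)-y|\,p^x(dy)$ is the elementary integral triangle inequality for finite positive measures rather than Jensen's inequality, since $p^x$ is an unnormalized kernel and need not have mass one; the inequality holds regardless, so this is a labeling issue, not a gap.
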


\begin{proof}
{It is easy to check that $r$ transports $\mu$ to $\rho$.}
	In order to show \eqref{eq:gluingineq}, we calculate
	\begin{align*}
		\int \left| x- \ba(r^x)  \right|  \, \mu(dx)
		&= \int \left| x- \int \ba(q^y) p^x(dy) \right| \, \mu(dx) 
		\\
		&= \int \left| x- \ba(p^x) + \ba(p^x) - \int \ba(q^y) p^x(dy) \right| \, \mu(dx) \\
		&\leq \int \left| x- \ba(p^x)\right| \, \mu(dx) + \iint\left| y  - \ba(q^y)  \right| \, p^x(dy) \mu(dx) \\
		&= \int \left| x- \ba(p^x)\right| \, \mu(dx) + \int  \left| y  - \ba(q^y)  \right| \, \nu(dy). \qedhere
	\end{align*}
\end{proof}



\begin{proposition}\label{prop:char_ph_equiv}
	For $\mu, \nu \in \M_1(\R^d)$ the following are equivalent: 
	\begin{enumerate}[(i)]
		\item $\Qm(\mu,\nu) \neq \emptyset$ and $\Qm(\nu,\mu) \neq \emptyset$
		\item $\mu \phc \nu$ and $\nu \phc \mu$
		\item $\mu \eqph \nu$
		\item $\mu_\Sp = \nu_\Sp$
	\end{enumerate}
\end{proposition}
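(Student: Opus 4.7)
The plan is to close the cycle $(\text{i}) \Rightarrow (\text{ii}) \Leftrightarrow (\text{iii}) \Leftrightarrow (\text{iv}) \Rightarrow (\text{i})$, with the only substantive step being $(\text{iv}) \Rightarrow (\text{i})$ since all the necessary machinery has been prepared above. Indeed, $(\text{ii}) \Leftrightarrow (\text{iii})$ is exactly Corollary~\ref{cor:ph=phc}, and $(\text{iii}) \Leftrightarrow (\text{iv})$ would follow at once from the defining identity of $\mu_\Sp$ together with the one-to-one correspondence $f \leftrightarrow \bar f$ between bounded Borel functions on $\Sp^{d-1}$ and positively 1-homogeneous Borel functions on $\R^d$ that are bounded on $\Sp^{d-1}$, since finite Borel measures on $\Sp^{d-1}$ are determined by their integrals against bounded Borel functions.

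For $(\text{i}) \Rightarrow (\text{ii})$ I would take $p \in \mathcal{Q}_{\textup{m}}(\mu,\nu)$ and a support function $f$ and use that $f$ is sublinear, so that Jensen's inequality survives on the possibly unnormalized fibres. Writing $p^x = m_x \tilde p^x$ with $m_x := p^x(\R^d)$ and $\tilde p^x$ a probability measure whenever $m_x > 0$, positive 1-homogeneity yields
\[
f\bigl(\ba(p^x)\bigr) \;=\; m_x\, f\bigl(\ba(\tilde p^x)\bigr) \;\le\; m_x \int f\, d\tilde p^x \;=\; \int f\, dp^x,
\]
and the inequality is trivial when $m_x = 0$. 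Combining this with moment-preservation $\ba(p^x) = x$ and integrating against $\mu$ gives $\int f\, d\mu \le \int f\, d\nu$, and applying the same argument to a kernel in $\mathcal{Q}_{\textup{m}}(\nu,\mu)$ delivers the reverse inequality.

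For the main step $(\text{iv}) \Rightarrow (\text{i})$ I would first dispatch the degenerate cases in which $\mu$ or $\nu$ is of the form $c\delta_0$, exactly as in the remark following Proposition~\ref{prop:sphere}. Assuming $\mu_\Sp = \nu_\Sp$ with neither measure degenerate, Proposition~\ref{prop:sphere} applied to each of $\mu$ and $\nu$ supplies kernels $p_\mu \in \mathcal{Q}_{\textup{m}}(\mu, \mu_\Sp)$, $q_\mu \in \mathcal{Q}_{\textup{m}}(\mu_\Sp, \mu)$, $p_\nu \in \mathcal{Q}_{\textup{m}}(\nu, \nu_\Sp)$, and $q_\nu \in \mathcal{Q}_{\textup{m}}(\nu_\Sp, \nu)$. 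Because the intermediate measures coincide, the gluing lemma (Lemma~\ref{lemma:gluing}) applied to $(p_\mu, q_\nu)$ produces a kernel in $\mathcal{Q}_{\textup{m}}(\mu, \nu)$, and applied to $(p_\nu, q_\mu)$ a kernel in $\mathcal{Q}_{\textup{m}}(\nu, \mu)$, completing the cycle.

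There is no real obstacle here: the conceptual content has been absorbed into Proposition~\ref{prop:sphere} and Lemma~\ref{lemma:gluing}, so the proof reduces to pushing both measures to the sphere, identifying them there via the hypothesis, and transporting back. The only place requiring care is the bookkeeping around the degenerate measures $c\delta_0$.
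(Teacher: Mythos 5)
Your proof is correct and follows essentially the same route as the paper: (i)$\Rightarrow$(ii) via Jensen/sublinearity, (ii)$\Leftrightarrow$(iii) via Corollary~\ref{cor:ph=phc}, (iii)$\Rightarrow$(iv) via the homogeneous extension, and (iv)$\Rightarrow$(i) by gluing the kernels from Proposition~\ref{prop:sphere} through the common spherical marginal. Your extra care with the unnormalized fibres in the Jensen step and with the $c\delta_0$ cases (which the paper excludes by its earlier remark) is sound but does not change the argument.
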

\begin{proof}
	First show that (i) implies (ii). To this end, let us assume the existence of $p \in \Qm(\mu,\nu)$ and let $f$ be a support function. Then, by Jensen's inequality {and the positive 1-homogeneity of $f$}, we find
	\begin{align}\label{eq:jensen}
		\int f(x) \, \mu(dx)= \int f \left(\int y \, p^x(dy)\right) \, \mu(dx) \leq \iint f(y) \, p^x(dy) \, \mu(dx) = \int f(y) \, \nu(dy).
	\end{align}
	The reverse inequality is proved the same way. The equivalence of (ii) and (iii) is precisely Corollary~\ref{cor:ph=phc}. 	 {Further, (iii) implies (iv) because we have for every Borel $f: \Sp^{d-1} \to \R$}
	\[
	\int f(u) \, \mu_\Sp(du)= \int \bar{f}(x) \, \mu(dx) = \int \bar{f}(x) \, \nu(dx)= \int f(u) \, \nu_\Sp.
	\]
	To conclude the proof, we show that (iv) implies (i). {Indeed, } Proposition~\ref{prop:sphere} guarantees that $\mathcal{Q}(\mu,\mu_\Sp)\neq \emptyset$ and $\mathcal{Q}(\nu_\Sp, \nu)\neq \emptyset$ and thus  $\mathcal{Q}(\mu,\nu)\neq \emptyset$ {by Lemma~\ref{lemma:gluing}.}  
\end{proof}
If $\mu, \nu \in \prob(\R^d)$ {with finite second moment} are in convex order and satisfy $\int \! |x|^2 \mu(dx) = \int |x|^2 \,\nu(dx)$, then $\mu=\nu$. 
There is an analogous phenomenon for the $\phc$-order: 

\begin{remark}
	Let $\mu, \nu \in \M_1(\R^d)$ satisfy $\mu \phc \nu$ and  $\int |x| \, \mu(dx) = \int |x| \, \nu(dx)$. Then $\mu \eqph \nu$.  To see this, let $f \in C^2(\R^d \setminus \{0\})$ be positively 1-homogeneous. Applying Lemma~\ref{lemma:C2_supfct} to $f$ and $-f$ yields the existence of support functions $g_1, g_2$ and constants $c_1,c_2$ such that $f(x)=g_1(x)-c_1|x|= -g_2(x) + c_2|x|$. This implies $\int f \,d\mu = \int f\, d\nu$. By standard approximation results, we conclude $\int f \,d\mu = \int f \, d\nu$ for every 
	Borel {positively 1}-homogeneous $f : \R^d \to \R$. 
\end{remark}

{We close this section with the following observation.
\begin{remark}\label{rem:cospt}
Write $\mathrm{co}(\mathrm{supp}(\mu))$ for the closed convex cone generated	by the support of $\mu$. Then $\mathrm{co}(\mathrm{supp}(\mu))$ is the intersection over all closed half-spaces $\{x \in \R^d: x \cdot u \le 0\}$ such that $\int (x \cdot u)_+ \, \mu(dx) =0$. Hence, if $\mu \phc \nu$ we have $\mathrm{co}(\mathrm{supp}(\mu)) \subseteq \mathrm{co}(\mathrm{supp}(\nu))$.
\end{remark}}


\section{Functional analytic proof of Theorem~\ref{thm:main}}
 
  This section is devoted to the proof of Theorem~\ref{thm:main}. Our proof is a modification of the classical proof of Strassen's theorem given in \cite{St65}. A main step in his proof is the following integral representation for {continuous real-valued} support functions {(i.e.\ convex and positively 1-homogeneous functions) on a Banach space}. 


\begin{proposition}[{\cite[Theorem 1]{St65}}]\label{prop:StrassenThm1}
	Let $(\Omega,\F,{\mu})$ be a probability  space, let $X$ be a separable Banach space, and for each $\omega \in \Omega$ let $h_\omega$ be a continuous support function on $X$ such that for every $x \in X$ the map $\omega \mapsto h_\omega(x)$ is $\F$-measurable. Then $h := \int h_\omega \, \mu(d\omega)$ is a continuous support function on $X$ and for all $\phi$ in the topological dual space $X^\ast$ the following are equivalent:
	\begin{enumerate}[(i)]
		\item $\phi \le h$
		\item There are $\phi_\omega \in X^\ast$ such that $\phi_\omega \le h_\omega$ and $\phi = \int \phi_\omega\, \mu(d\omega)$ such that the map $\omega \mapsto \phi_\omega(x)$ is $\F$-measurable for every $x \in X$.
	\end{enumerate}
\end{proposition}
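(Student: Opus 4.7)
The implication (ii) $\Rightarrow$ (i) is immediate: integrating the pointwise inequality $\phi_\omega(x) \le h_\omega(x)$ in $\omega$ gives $\phi(x) \le h(x)$ for every $x \in X$.

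For (i) $\Rightarrow$ (ii), the plan is to introduce the convex set
$$S := \left\{\int_\Omega \phi_\omega\,\mu(d\omega) : (\phi_\omega)_{\omega \in \Omega} \text{ is weakly measurable},\ \phi_\omega \in X^\ast,\ \phi_\omega \le h_\omega\right\} \subset X^\ast,$$
and to prove $S = \{\psi \in X^\ast : \psi \le h\}$. The inclusion ``$\subset$'' is just the easy direction applied to every member of $S$; the content is ``$\supset$''.

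The key intermediate step is the pointwise identity $\sup_{\psi \in S} \psi(x) = h(x)$ for every $x \in X$. Fix such an $x$. Since $h_\omega$ is a continuous sublinear function, Hahn--Banach produces for each $\omega$ some $\psi \in X^\ast$ with $\psi \le h_\omega$ and $\psi(x) = h_\omega(x)$, so that
$$F_\omega^x := \{\psi \in X^\ast : \psi \le h_\omega,\ \psi(x) = h_\omega(x)\}$$
is nonempty, convex, and weak-$*$ compact. Because $X$ is separable, bounded weak-$*$ closed subsets of $X^\ast$ are Polish in the weak-$*$ topology, and the hypothesis that $\omega \mapsto h_\omega(y)$ is measurable (applied to $y$ in a countable dense set) makes the multifunction $\omega \mapsto F_\omega^x$ measurable. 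A Kuratowski--Ryll-Nardzewski type selection theorem then yields a weakly measurable selection $\omega \mapsto \phi_\omega^x \in F_\omega^x$, whose integral lies in $S$ and realizes the value $h(x)$ at $x$.

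With the pointwise identity in hand, Hahn--Banach separation in $X^\ast$ equipped with the weak-$*$ topology (where continuous linear functionals are precisely evaluations at points of $X$) shows that $\phi \le h$ implies $\phi \in \overline{S}^{w^\ast}$: any separating $x \in X$ would give $\phi(x) > \sup_{\psi \in S} \psi(x) = h(x)$, contradicting (i). To upgrade weak-$*$ closure membership to genuine membership in $S$, I would verify that $S$ is itself weak-$*$ closed, using that each $A_\omega := \{\psi \in X^\ast : \psi \le h_\omega\}$ is weak-$*$ compact and convex by Banach--Alaoglu, together with an Aumann-type closedness theorem for integrals of measurable multifunctions taking values in a Polish convex compact space. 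I expect this final closedness argument, together with the weak-$*$ measurable selection, to be the main technical obstacle of the proof; the Hahn--Banach and supremum computations themselves are essentially formal once the functional-analytic framework is in place.
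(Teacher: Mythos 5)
The paper does not give its own proof of Proposition~\ref{prop:StrassenThm1}: it cites it directly as \cite[Theorem~1]{St65} and, crucially, only ever invokes the \emph{finitely supported} case, which it re-derives independently as Corollary~\ref{cor:replaceStrassen1} from the subdifferential sum rule (Proposition~\ref{prop:subdiff_sum}). The general-measure case is then avoided entirely by the approximation-and-compactness arguments of Section~3.3. So there is no in-paper proof to compare yours against; you are effectively reconstructing Strassen's original argument.

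As for your proposal itself: the skeleton is right, and it is indeed close in spirit to how one would prove this today. The separation step is clean --- once $S$ is weak-$*$ closed, convex, nonempty, and satisfies $\sup_{\psi \in S}\psi(x)=h(x)$ for all $x$, a point $\phi\le h$ with $\phi\notin S$ would be strictly separated from $S$ by some evaluation functional $\mathrm{ev}_x$, giving $\phi(x)>h(x)$, a contradiction. But the two steps you flag as ``the main technical obstacle'' are genuinely the whole proof, and as written they have gaps that need closing. First, the multifunctions $\omega\mapsto F_\omega^x$ and $\omega\mapsto A_\omega$ are not contained in a fixed bounded ball (the Lipschitz constant of $h_\omega$ varies with $\omega$), so the weak-$*$ topology on their union is not Polish, only Lusin/Suslin; Kuratowski--Ryll-Nardzewski in its basic form does not apply, and you would need either a truncation argument (work inside $\{\omega : M_\omega \le n\}$, patch) or a selection theorem valid for Suslin-valued multifunctions with measurable graph. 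Second, you assert measurability of $\omega\mapsto F_\omega^x$ from measurability of $\omega\mapsto h_\omega(y)$, $y$ in a countable dense set, but this needs an explicit verification (e.g.\ a Castaing representation or a measurable-graph argument), since $F_\omega^x$ is cut out by an equality constraint $\psi(x)=h_\omega(x)$, not just by the inequalities. Third, the weak-$*$ closedness (in fact compactness) of the Aumann-type integral $S$ is a theorem, not a formality, and needs a citation or a proof; it again runs into the non-uniform-boundedness issue. None of these points looks fatal, but as it stands the proposal is a plausible outline rather than a proof, and the parts left to the reader are exactly the parts that carry the weight. You might also note for your own bookkeeping that the integral $\int \phi_\omega\,d\mu$ must be read as a Gelfand (weak-$*$) integral, which is well defined here because $|\phi_\omega(x)|\le \max(h_\omega(x),h_\omega(-x))$ is $\mu$-integrable for each $x$.
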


In order to prove Theorem~\ref{thm:strassen}, Strassen applied Proposition~\ref{prop:StrassenThm1} to the measure space $(\Omega,\mu)$, where $\Omega \subset \R^d$ is a compact {convex} set, and the separable Banach space $X=C(\Omega)$, whose dual is the space of signed Borel measures $\M(\Omega)$ by {the} Riesz representation theorem. For $\omega \in \Omega$ he considered the {following} support function {on C$(\Omega)$}
\begin{align}\label{eq:supp_func_Strassen}
	h_\omega(f):= \inf \{ g(\omega): {g \in C(\Omega)  \text{ concave and } g \ge f } \}.
\end{align}
Let the function $h$ be defined by $h:= \int h_\omega \, \mu(d\omega)$. It turns out that $\mu \cx \nu$  implies that $\nu$ regarded as a linear functional on $C(\Omega)$ is dominated by the support function $h$, i.e.\   $\int f \, d\nu \le h(f)$ for all $f \in C(\Omega)$. Then the family of linear functionals $(\phi_\omega)_{\omega \in \Omega}$ provided by Proposition~\ref{prop:StrassenThm1} can be seen as a kernel by the Riesz representation theorem. It turns out that this kernel is a martingale transport from $\mu$ to $\nu$.

In order to prove Theorem~\ref{thm:main}, we need to replace the support function \eqref{eq:supp_func_Strassen}  by
\begin{align}\label{eq:supp_func_new}
	{h_u(f):= \inf \{ g(u): g \in C(\Sp^{d-1}) \text{ such that $-g$ is a support function and } g \ge f \}}
\end{align}
{for $u \in \Sp^{d-1}$. Note that we replaced the compact convex set $\Omega$ with $\Sp^{d-1}$. This is a natural choice  because $\Sp^{d-1}$ is compact (hence the Riesz representation theorem is available), support functions are already determined by their behavior on $\Sp^{d-1}$, and the constructions in Section~\ref{sec:2} allow us to only consider measures on $\Sp^{d-1}$ for proving Theorem~\ref{thm:main}.} 
However, {the} support function {\eqref{eq:supp_func_new}} is not continuous and it may even take the value $+\infty$. For instance, {$h_u(1)=+\infty$} because the infimum in \eqref{eq:supp_func_new} is empty. The aim of the next {sub}section is to generalize Proposition~\ref{prop:StrassenThm1} in order to fit our needs.

\subsection{Tools from convex analysis}\label{sec:conv_ana}

 Let $X$ be a Banach space and $f : X \to {\R \cup \{+\infty\}}$ be a convex function. Recall that the domain of $f$ is defined as $\dom(f) := \{ x \in X : f(x)< +\infty\}$. We denote with  $\cont(f)$ the set of points in the interior of $\dom(f)$ at which $f$ is continuous. 

The subdifferential of $f$ at $x \in \dom(f)$ is defined as 
$$
\partial f(x) = \{  \phi \in X^\ast : f(y) \ge f(x) + \phi(y-x) \text{ for all } y \in X \}.
$$
A crucial observation is that if $h$ is a support function, we have $\partial h(0) = \{ \phi \in X^\ast : \phi \le h\}$, so in the case that $\mu$ has finite support Proposition~\ref{prop:StrassenThm1} is just an instance of the subdifferential sum rule, which can be found e.g.\ in \cite[Theorem~4.1.19]{BoVa10}{.}
\begin{proposition}\label{prop:subdiff_sum}
	Let $f, g : X \to {\R \cup \{+\infty\}}$ be convex and assume that $\dom(f) \cap \cont(g) \neq \emptyset$. Then we have for all $x \in \dom(f) \cap \dom(g)$
	$$
	\partial (f+g) (x) = \partial f(x) + \partial g (x),
	$$
	where the $+$ on the right hand side is a Minkowski sum.
\end{proposition}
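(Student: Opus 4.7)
The inclusion $\partial f(x) + \partial g(x) \subset \partial(f+g)(x)$ is immediate from adding the subgradient inequalities. The substantive direction $\partial(f+g)(x) \subset \partial f(x) + \partial g(x)$ is the Moreau--Rockafellar sum rule, and the standard route is a Hahn--Banach separation of the epigraph of $f$ and the hypograph of $-g$, after a suitable affine reduction.

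Given $\phi \in \partial(f+g)(x)$, replace $f$ by $\tilde f(y) := f(y+x) - f(x) - \phi(y)$ and $g$ by $\tilde g(y) := g(y+x) - g(x)$. This reduces the problem to the case $\tilde f(0) = \tilde g(0) = 0$, $\tilde f + \tilde g \ge 0$, and $\dom(\tilde f) \cap \cont(\tilde g) \ne \emptyset$; the goal becomes producing $\psi \in X^\ast$ with $-\psi \in \partial \tilde f(0)$ and $\psi \in \partial \tilde g(0)$, since unwinding the translations then delivers $\phi - \psi \in \partial f(x)$ and $\psi \in \partial g(x)$.

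Consider the convex subsets of $X \times \R$
\[ A := \{(y,t) : \tilde f(y) \le t\}, \qquad B := \{(y,t) : t \le -\tilde g(y)\}. \]
The hypothesis $\tilde f + \tilde g \ge 0$ gives $A \cap \textup{int}(B) = \emptyset$, while $\textup{int}(B) \ne \emptyset$ because at any $y_0 \in \cont(\tilde g)$ continuity of $\tilde g$ yields an open neighbourhood of $(y_0, -\tilde g(y_0) - 1)$ inside $B$. Geometric Hahn--Banach then produces a nonzero $(\psi,\alpha) \in X^\ast \times \R$ with $\psi(y) + \alpha t \ge 0$ on $A$ and $\psi(y) + \alpha t \le 0$ on $B$; the separating constant is $0$ since $(0,0) \in A \cap B$, and $\alpha \ge 0$ by testing against $(0,t)$ with $t \ge 0$.

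The main obstacle is excluding the degenerate case $\alpha = 0$, and this is precisely where the continuity hypothesis is used in a nontrivial way. If $\alpha = 0$, one obtains $\psi \ge 0$ on $\dom(\tilde f)$ and $\psi \le 0$ on $\dom(\tilde g)$; picking $y_0 \in \cont(\tilde g) \cap \dom(\tilde f)$ and exploiting that $\psi$ is continuous and linear while $\cont(\tilde g)$ is an open neighbourhood of $y_0$ forces $\psi \equiv 0$ on a ball around $y_0$, hence everywhere, contradicting $(\psi,\alpha) \ne 0$. After normalizing $\alpha = 1$, evaluating the separation inequalities at $(y, \tilde f(y)) \in A$ and $(y, -\tilde g(y)) \in B$ yields $\tilde f(y) \ge -\psi(y)$ and $\tilde g(y) \ge \psi(y)$, i.e.\ $-\psi \in \partial \tilde f(0)$ and $\psi \in \partial \tilde g(0)$, finishing the decomposition.
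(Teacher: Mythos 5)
Your argument is correct. Note that the paper does not prove this proposition at all: it is quoted as a known result (the Moreau--Rockafellar subdifferential sum rule, cited from \cite[Theorem~4.1.19]{BoVa10}), so there is no in-paper proof to compare against. What you give is the classical self-contained proof: the trivial inclusion by adding subgradient inequalities, the affine reduction to $\tilde f+\tilde g\ge 0$ with $\tilde f(0)=\tilde g(0)=0$, and Eidelheit separation of the epigraph of $\tilde f$ from the hypograph of $-\tilde g$, with the continuity hypothesis entering exactly twice, to make $\mathrm{int}(B)$ nonempty and to rule out a vertical separating hyperplane ($\alpha=0$). All the delicate points are handled correctly: the separating constant is $0$ because $(0,0)\in A\cap B$, $\alpha\ge 0$ by testing $(0,t)$, and in the degenerate case $\psi\le 0$ on a ball around $y_0$ together with $\psi(y_0)\ge 0$ forces $\psi\equiv 0$ by linearity, contradicting nontriviality of the separating functional. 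One cosmetic remark: for that last step you only need a ball around $y_0$ inside $\dom(\tilde g)$, which follows from $y_0\in\mathrm{int}\,\dom(\tilde g)$; you do not need to argue that $\cont(\tilde g)$ itself is open (although it is, since continuity at one interior point propagates to the whole interior of the domain).
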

It is easy the see that $\partial (\lambda f)(x) = \lambda \partial f(x)$ for every positive scalar $\lambda$. Therefore we have 

\begin{corollary}\label{cor:replaceStrassen1}
	Let $h_1,\dots, h_n : X \to {\R \cup \{+\infty\}}$ be support functions and $\lambda_1, \dots, \lambda_n >0$. Assume that $\bigcap_{i=1}^n \cont(h_i) \neq \emptyset$. Then $h := \sum_{i=1}^n \lambda_i h_i$ is a support function and for all $\phi \in X^\ast$ the following are equivalent:
	\begin{enumerate}[(i)]
		\item $\phi \le h$
		\item There are $\phi_1,\dots, \phi_n \in X^\ast$ such that $\phi_i \le h_i$ and $\phi= \sum_{i=1}^n \lambda_i \phi_i$.
	\end{enumerate}
\end{corollary}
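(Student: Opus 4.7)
The plan is to translate both conditions (i) and (ii) into statements about the subdifferential at the origin and then invoke the sum rule, Proposition~\ref{prop:subdiff_sum}. The key observation is that any support function $g : X \to (-\infty,\infty]$ satisfies $g(0)=0$ by positive 1-homogeneity; consequently, for $\phi \in X^\ast$ the inequality $\phi \le g$ is equivalent to $g(y) \ge g(0) + \phi(y-0)$ for all $y \in X$, i.e., to $\phi \in \partial g(0)$. Hence (i) reads $\phi \in \partial h(0)$, and using $\partial(\lambda_i h_i)(0) = \lambda_i \partial h_i(0)$ for $\lambda_i > 0$, (ii) reads $\phi \in \sum_{i=1}^n \lambda_i \partial h_i(0)$.

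Next, $h := \sum_{i=1}^n \lambda_i h_i$ is itself convex and positively 1-homogeneous — hence a support function — since both properties are preserved under nonnegative linear combinations. It therefore remains to establish
$$\partial h(0) = \sum_{i=1}^n \lambda_i \partial h_i(0),$$
which I would prove by induction on $n$: the step from $n-1$ to $n$ follows by applying Proposition~\ref{prop:subdiff_sum} to the partial sum $H_{n-1} := \sum_{i=1}^{n-1} \lambda_i h_i$ and to $\lambda_n h_n$, yielding $\partial h(0) = \partial H_{n-1}(0) + \partial(\lambda_n h_n)(0)$; the inductive hypothesis then expands the first summand. The only hypothesis that must be checked at each step is the qualification condition $\dom(H_{n-1}) \cap \cont(\lambda_n h_n) \neq \emptyset$.

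This is precisely where $\bigcap_{i=1}^n \cont(h_i) \neq \emptyset$ enters. Any point $x_0$ in this intersection lies in the interior of every $\dom(h_i)$, and a finite sum of convex functions each continuous at $x_0$ is again continuous at $x_0$; since scaling by $\lambda_n>0$ preserves continuity, we obtain $x_0 \in \cont(H_{n-1}) \cap \cont(\lambda_n h_n)$, and the sum rule applies. There is no serious obstacle in this proof; the only subtle point is that demanding a \emph{common} continuity point of the $h_i$ (rather than merely $\cont(h_i) \neq \emptyset$ for each $i$ separately) is exactly what keeps the sum-rule qualification intact throughout the induction.
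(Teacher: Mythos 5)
Your proposal is correct and follows essentially the same route as the paper: identify $\{\phi : \phi \le g\}$ with $\partial g(0)$ for support functions, use $\partial(\lambda f)(0)=\lambda\,\partial f(0)$, and apply the subdifferential sum rule (Proposition~\ref{prop:subdiff_sum}) at $x=0$. You merely make explicit the induction extending the two-function sum rule to $n$ summands and the verification of the qualification condition via the common continuity point, details the paper's proof leaves implicit with ``as the requisites for the subdifferential rule are met.''
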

\begin{proof}
	{Note that since $\cont(h_i) \subseteq \dom(h_i)$ for $1\leq i \leq n$ by definition, all} the requisites for the subdifferential {sum} rule are met {and} we apply the subdifferential {sum} rule to $h$ at the point $x=0$ and find
	
	\[
	\{\phi \in X^* : \phi \leq h\}=\partial h(0)=\sum_{i=1}^n \lambda_i \partial h_i(0). \qedhere
	\]
\end{proof}

\subsection{Proof of Theorem~\ref{thm:main} under the assumption that $\mu$ has finite support}
We adapt Strassen's proof \cite[Theorem~2]{St65} to our setting, using the results from the previous subsection. 

\begin{lemma}\label{lem:proph}
	For {$u  \in \Sp^{d-1}$} consider the function
	\begin{align}\label{eq:supp_func_new2}
		{h_u(f):= \inf \{ g(u): g \in C(\Sp^{d-1}) \text{ such that $-g$ is a support function and } g \ge f \}.}
	\end{align}
	Then we have
	\begin{enumerate}[(i)]
		\item $f\mapsto h_{{u}}(f): C({\Sp^{d-1}}) \to {\R \cup \{+\infty\}} $ is a support function.
		\item ${u} \mapsto -h_{{u}}(f): {\Sp^{d-1}} \to {\R \cup \{+\infty\}} $ is a support function for any $f\in C({\Sp^{d-1}})$.
		\item If $f \le 0$, then $h_{u}(f) \le 0$. In particular, $\dom(h_{u}) \supseteq \{ f \in C({\Sp^d}) : f \le 0 \}$.
		\item The constant function $-1$ is a continuity point of $h_{u}$.
	\end{enumerate}
\end{lemma}
\begin{proof}The items (i) to (iii) are straightforward to check. In order to prove (iv), {note that  $h_{u}(-c)=-c$ for every  $c\ge 0$.} It is easy to see that $f_1 \le f_2$ implies $h_{u}(f_1)\le h_{u}(f_2)$ for all ${u} \in {\Sp^{d-1}}$. Let $\varepsilon \in (0,1)$ and $f \in C({\Sp^{d-1}})$ such that $||-1-f||_\infty < \varepsilon$. Then we have for all ${u} \in {\Sp^{d-1}}$
	$$
	{-}1{-}\varepsilon = h_{u}(-1-\epsilon) \le h_{u}(f) \le h_{u}(-1+\epsilon) = {-}1{+}\epsilon 
	$$
	and hence $|h_{u} (f) - h_{u}(-1)| \le \epsilon$. 
\end{proof}

\begin{remark}
	The functions $h_\omega$ as defined in \eqref{eq:supp_func_Strassen}  which Strassen used to prove Theorem~\ref{thm:strassen} are, up to a sign-convention, just the convex envelope, i.e.\   the function $\omega \mapsto -h_\omega(-f)$ is precisely the convex envelope of $f$. 
	
	At first glance, the function $h_{u}$ that we introduced in \eqref{eq:supp_func_new2} seems to just be a technical modification of that to fit our needs. However, there is a geometric interpretation as well. If $f \in C(\Sp^{d-1})$, $f \ge 0$ the Wulff-shape associated to $f$ is defined by 
	$$
	W_f := \{ x \in \R^d : x \cdot u \le f(u) \text{ for all } u \in \Sp^{d-1} \}.
	$$
	Then the function ${u} \mapsto -h_{{u}}(-{f})$ is precisely the support function of the Wulff-shape $W_{f}$. The Wulff-shape has many applications in convex geometry and crystallography, see for example \cite[Chapter 7.5]{Schneider1993}.
	
\end{remark}

\begin{proposition}\label{prop:mu_endlich}
	Let $\mu \phc \nu \in \M_1({\Sp^{d-1}})$. { If $\mu$ is finitely supported, then $\Qm(\mu,\nu) \neq \emptyset$.}
\end{proposition}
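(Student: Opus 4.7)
The proof mirrors Strassen's argument for Theorem~\ref{thm:strassen}, with the modified support functions $h_\omega$ from \eqref{eq:supp_func_new2} replacing their classical counterparts and Corollary~\ref{cor:replaceStrassen1} playing the role of Strassen's Proposition~\ref{prop:StrassenThm1}. Since Corollary~\ref{cor:replaceStrassen1} handles only finite sums, the plan is to proceed in two stages: first dispatch the case when $\mu$ is finitely supported, then extend to general compactly supported $\mu$ by a weak-compactness argument.

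\emph{Stage 1, finitely supported $\mu = \sum_{i=1}^n \lambda_i \delta_{\omega_i}$.} I set $h := \sum_{i=1}^n \lambda_i h_{\omega_i}$. By Lemma~\ref{lem:proph} each $h_{\omega_i}$ is a $(-\infty,+\infty]$-valued support function on $C(\Omega)$ admitting the constant function $-1$ as a continuity point, so Corollary~\ref{cor:replaceStrassen1} applies to $h$. The crucial step is verifying that $\nu$, viewed as a linear functional on $C(\Omega)$, satisfies $\nu \le h$. For any $f \in C(\Omega)$, the function $\omega \mapsto -h_\omega(f)$ is a support function on $\R^d$ by Lemma~\ref{lem:proph}(ii), so $\mu \phc \nu$ gives $\int h_\omega(f) \, \mu(d\omega) \ge \int h_\omega(f) \, \nu(d\omega)$; combined with the trivial pointwise bound $h_\omega(f) \ge f(\omega)$ (immediate from the defining infimum in \eqref{eq:supp_func_new2}), this yields $h(f) \ge \int f \, d\nu$. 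Corollary~\ref{cor:replaceStrassen1} then produces $\phi_i \in C(\Omega)^*$ with $\phi_i \le h_{\omega_i}$ and $\nu = \sum_i \lambda_i \phi_i$. Positivity of $\phi_i$ as a Radon measure on $\Omega$ follows from Lemma~\ref{lem:proph}(iii), while $\ba(\phi_i) = \omega_i$ follows by applying the domination to the coordinate functions $\pm x_j$: these are linear, hence concave and positively 1-homogeneous, so they are their own upper envelopes and give $h_{\omega_i}(\pm x_j) = \pm(\omega_i)_j$. The kernel $q^{\omega_i} := \phi_i$ therefore lies in $\mathcal{Q}_{\textup{m}}(\mu,\nu)$.

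\emph{Stage 2, general compactly supported $\mu$.} I partition $\Omega$ into Borel cells $A_i^{(n)}$ of positive $\mu$-mass with diameters tending to zero and replace $\mu|_{A_i^{(n)}}$ by a Dirac at its conditional barycenter, carrying mass $\mu(A_i^{(n)})$. This yields finitely supported $\mu_n \in \M_1(\Omega)$ with $\mu_n \to \mu$ weakly and $\mu_n \phc \mu \phc \nu$, the first relation being Jensen's inequality applied to each support function. Stage~1 produces $q_n \in \mathcal{Q}_{\textup{m}}(\mu_n,\nu)$; I package these as couplings $\pi_n(dx,dy) := \mu_n(dx) \, q_n^x(dy)$ on the compact space $\Omega \times \Omega$, extract a weak$^*$ limit $\pi$ via Prokhorov, and disintegrate against the first marginal $\mu$ to obtain the desired kernel $q$. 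The marginals of $\pi$ are $\mu$ and $\nu$ respectively, and the moment-preserving identity $\int \psi(x) \cdot (y - x) \, \pi_n(dx,dy) = 0$ passes to $\pi$ for every continuous $\psi : \Omega \to \R^d$ by boundedness of $y - x$ on $\Omega \times \Omega$, and then extends to bounded Borel $\psi$ by a monotone class argument, giving $\ba(q^x) = x$ for $\mu$-a.e.\ $x$.

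The main obstacle is the inequality $\nu \le h$ in Stage~1, whose resolution rests crucially on Lemma~\ref{lem:proph}(ii) to translate $\mu \phc \nu$ into a domination of exactly the shape demanded by Corollary~\ref{cor:replaceStrassen1}. The Stage~2 compactness argument is essentially routine but requires some care in showing that the moment-preserving property survives the weak$^*$ limit and that the resulting disintegration is genuinely moment-preserving rather than only barycentric in an averaged sense.
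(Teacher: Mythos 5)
Your Stage~1 is exactly the paper's proof of this proposition: the same support function $h=\sum_i\lambda_i h_{\omega_i}$, the same use of Lemma~\ref{lem:proph}(ii) to turn $\mu\phc\nu$ into $\int h_\omega(f)\,d\nu\le\int h_\omega(f)\,d\mu$, the same application of Corollary~\ref{cor:replaceStrassen1} with Riesz representation, and the same verification of positivity via Lemma~\ref{lem:proph}(iii) and of $\ba(q^{\omega_i})=\omega_i$ via the coordinate functions $\pm x_j$. This is also all the paper's proof contains: despite the wording ``compactly supported'' in the statement, the proof begins ``As $\mu$ has finite support'' and the proposition is only ever invoked for finitely supported $\mu$ (see the subsection title and the final proof of Theorem~\ref{thm:main}), so the intended hypothesis is finite support and your Stage~1 already discharges it.

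Your Stage~2, however, contains a genuine error, and it is precisely the difficulty that the paper's Section~\ref{sec:general_measures} is built to circumvent. The kernels $q_n\in\mathcal{Q}_{\textup{m}}(\mu_n,\nu)$ are \emph{unnormalized}: $q_n^x(\Omega)$ need not equal $1$ (indeed it cannot be identically $1$ when $\mu(\Omega)\neq\nu(\Omega)$). Consequently the first marginal of $\pi_n(dx,dy)=\mu_n(dx)\,q_n^x(dy)$ is $q_n^x(\Omega)\,\mu_n(dx)$, not $\mu_n$, so the limit $\pi$ need not have first marginal $\mu$ (nor even one absolutely continuous with respect to $\mu$), and ``disintegrating against $\mu$'' is not available. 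Likewise your moment identity fails: $\int(y-x)\,q_n^x(dy)=\ba(q_n^x)-x\,q_n^x(\Omega)=x\bigl(1-q_n^x(\Omega)\bigr)$, which vanishes only for normalized kernels. The correct invariant is $\int\psi(x)\,y\,\pi_n(dx,dy)=\int\psi(x)\,x\,\mu_n(dx)$, and extracting from it a moment-preserving kernel over $\mu$ requires handling the part of $\pi$ whose first marginal is singular with respect to $\mu$ (one can show that part carries zero barycenter and must be redistributed, as in the correction term $\nu_2/\mu(\R^d)$ of Proposition~\ref{prop:BarC}). The paper instead passes to general $\mu$ via barycentric costs, the triangle inequality \eqref{eq:gluingineq}, the Koml\'os-type compactness result for sub-kernels (Proposition~\ref{prop:Qle_compact}), and Proposition~\ref{prop:BarC}; your ``essentially routine'' compactness step is exactly the non-routine part.
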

\begin{proof}
	As $\mu$ has finite support, we can write it as $\mu = \sum_{i=1}^n \lambda_i \delta_{{u}_i}$ with $\lambda_i > 0$ and ${u}_i \in {\Sp^{d-1}}$.  We define $h := \sum_{i=1}^n \lambda_i h_{{u}_i} $, where $h_{{u}_i}$ is defined as in~\eqref{eq:supp_func_new2}. As $f({u}) \le h_{u}(f)$ for every ${u} \in {\Sp^d}$ and $f \in C({\Sp^d})$, we have
	$$
	\int f({u})\, \nu(d{u}) \le \int h_{u}(f) \, \nu(d{u}) \le \int h_{u}(f) \,\mu(d{u}) = \sum_{i=1}^n \lambda_i h_{{u}_i}(f) = h(f).
	$$
	So, $\nu$ regarded as a linear functional on $C({\Sp^{d-1}})$ is dominated by the support function $h : C({\Sp^{d-1}}) \to \R$. By Corollary~\ref{cor:replaceStrassen1} and the Riesz-representation theorem, there exist signed measures $
	q^{{u}_i}$ such that 
	\begin{enumerate}[(i)]
		\item $\nu = \sum_{i=1}^n \lambda_i q^{{u}_i} $ and
		\item $\int f\, dq^{{u}_i} \le h_{{u}_i}(f)$ for all $ \text{for all } f \in C({\Sp^{d-1}})$.
	\end{enumerate}
	For every $f \in C({\Sp^{d-1}})$ satisfying $f \le 0$ we have by (ii) and Lemma~\ref{lem:proph}(iii)
	$$
	\int f \,dq^{{u} _i} \le h_{{u}_i} (f) \le 0,
	$$
	so $q^{{u} _i}  $ is a positive measure. Property (i) ensures that the family $q:=(q^{{u}_i})_{1 \leq i \leq n}$, interpreted as a kernel, transports $\mu$ to $\nu$.
	
	Lastly, we need to check that  $q\in \Qm(\mu,\nu)$. {If $f \in C({\Sp^{d-1}})$ is linear}, we have
	$$
	\int f\, dq^{{u}_i} \le h_{{u}_i}(f) = f({u}_i).
	$$
	We apply this to the functions  $f(x)=\pm x_{j}$ for $1\leq {j} \leq d$ and derive $\ba(q^{{u}_i})={u}_i$.
\end{proof}

\subsection{Proof for general measures}\label{sec:general_measures}
{Now that we have shown the main result for $ { \mu \in \M_1(\Sp^{d-1})}$ with finite support, the next step is to drop this restricting condition on $\mu$. To achieve this, {we use} the barycentric cost to quantify how close {two measures} are to having a moment-preserving kernel between them. 

\begin{definition}
	The barycentric cost between  $\mu, \nu \in \mathcal{M}_1(\R^d)$ is defined by
	\[
	\mathrm{BarC}(\mu,\nu) := \inf_{q \in \mathcal{Q}(\mu,\nu)} \int \left|x-\ba(q^x) \right| \, \mu (dx).
	\]
\end{definition}
Note that \eqref{eq:gluingineq} implies that the barycentric cost satisfies the triangle inequality. Next, we approximate $\mu$ by a sequence of discrete measures $(\mu_n)_n$ which {are} below $\mu$ in the $\phc$-order.
\begin{lemma}\label{lemma:approximation}
	For every $ \mu \in \M_1(\Sp^{d-1})$  there exists a sequence $(\mu_n)_{n}$ of finitely supported measures in $\M_1(\Sp^{d-1})$ satisfying $\mu_n \phc \mu$ and $\mathrm{BarC}(\mu,\mu_n) \to 0$.
\end{lemma}

\begin{proof}
	We consider $\mu$ as a measure on $\R^d$ and assume w.l.o.g.\ that $\mu(\R^d)=1$. It is well-known (see e.g.\ \cite[Proof of Theorem~5.2]{ChGoKr23}) that there is a sequence $(\tilde{\mu}_n)_n$ of finitely supported probability measures such that  $\tilde{\mu}_n \cx \mu$ and  $\W_1(\mu,\tilde{\mu}_n) \to 0$. By Jensen's inequality, we have $\mathrm{BarC}(\mu,\tilde{\mu}_n) \le \W_1(\mu,\tilde{\mu}_n)$, so we have $\mathrm{BarC}(\mu,\tilde{\mu}_n)  \to 0$ as well. 
		
		Let  $\mu_n$ be the homogeneous marginal of $\tilde{\mu}_n$. As $\mu_n \eqph \tilde{\mu}_n$, we have $\mu_n \phc \mu$ and $\mathrm{BarC}(\mu,\mu_n) = \mathrm{BarC}(\mu,\tilde\mu_n)$. Hence, $(\mu_n)_n$ has the desired properties.
\end{proof}
}

Next, we want to show that there is attainment of the infimum in the barycentric costs if $\mathrm{BarC}(\mu,\nu)=0$. To that end, we use the following proposition, which is a consequence of Komlós' lemma  \cite{Ko67} (and also follows easily from  Mazur's lemma).  

\begin{proposition} \label{prop:komlos}  
	Let $(F_n)_n$ be a sequence in $L_1(\mu)$ satisfying $\sup_n ||F_n||_{L_1(\mu)} < \infty$. Then there are $G_n \in \textup{conv}\{ F_n, F_{n+1}, \dots \}$  and $G \in L_1(\mu)$ such that $G_n \to G$ $\mu$-a.s.
\end{proposition}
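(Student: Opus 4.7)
The plan is to derive this as a direct corollary of Komlós' lemma, which the authors already cite. Komlós' theorem in its standard form states that for any bounded sequence $(F_n)$ in $L_1(\mu)$ one can extract a subsequence $(F_{n_k})_k$ and a function $G \in L_1(\mu)$ such that the Cesàro averages of \emph{every} further subsequence of $(F_{n_k})$ converge to $G$ $\mu$-almost surely. I will build the required convex combinations $G_n$ as partial Cesàro averages from suitable tails of this Komlós subsequence, and then use a diagonal Borel–Cantelli argument to guarantee actual $\mu$-a.s.\ convergence of $(G_n)$ to $G$.

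First, I apply Komlós' lemma to $(F_n)$ to obtain the subsequence $(F_{n_k})_k$ and limit $G \in L_1(\mu)$ with the above property. Since $(n_k)$ is strictly increasing, we automatically have $n_k \geq k$ for every $k$. For each fixed $n \in \N$, the tail $(F_{n_k})_{k \geq n}$ is itself a subsequence of $(F_{n_k})$, so by the Komlós property the Cesàro averages
\[
H_{n,M} := \frac{1}{M}\sum_{k=n}^{n+M-1} F_{n_k}
\]
converge to $G$ $\mu$-a.s.\ as $M \to \infty$. Moreover, since $n_k \geq k \geq n$ for $k \geq n$, each $H_{n,M}$ is a finite convex combination of elements of $\{F_n,F_{n+1},\dots\}$, so $H_{n,M} \in \textup{conv}\{F_n,F_{n+1},\dots\}$.

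Second, I choose the index $M_n$ diagonally. Using $\mu$-a.s.\ convergence (together with Egorov if $\mu$ is not a probability measure, reducing to sets of finite measure), for each $n$ I can pick $M_n$ large enough that $\mu\bigl(\{|H_{n,M_n}-G|>1/n\}\bigr) < 2^{-n}$. Setting $G_n := H_{n,M_n}$, the Borel–Cantelli lemma then gives $G_n \to G$ $\mu$-a.s., while $G_n \in \textup{conv}\{F_n, F_{n+1}, \dots\}$ by construction.

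The main substantive input is Komlós' lemma itself, which is nontrivial but invoked as a black box. The only step that needs any care is the diagonal choice of $M_n$: for a $\sigma$-finite (or finite) measure $\mu$, this reduces to a standard Egorov/Borel–Cantelli argument, and no additional integrability beyond the $L_1$-boundedness of $(F_n)$ is required. A Mazur-based alternative would instead truncate and pass to a weakly convergent subsequence in $L_1$, apply Mazur's lemma to obtain norm-convergent convex combinations from tails, and extract a further $\mu$-a.s.\ convergent subsequence; the Komlós route is shorter and yields the full sequence $(G_n)$ directly.
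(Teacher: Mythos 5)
Your argument is correct and follows the same route as the paper, which simply invokes Koml\'os' lemma for this statement: you use the hereditary form of Koml\'os' theorem and correctly observe that tail Ces\`aro averages of the Koml\'os subsequence lie in $\textup{conv}\{F_n,F_{n+1},\dots\}$ (since $n_k\ge k$), after which the diagonal choice of $M_n$ and Borel--Cantelli legitimately upgrade convergence in measure (automatic for the finite measure $\mu$) to $\mu$-a.s.\ convergence of $(G_n)$ to $G$.
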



\begin{remark} \label{remark:komlos}
	We will use the following observation several times in the following  proofs: If $F_n \to F$ in $L_1(\mu)$ or pointwise, then for every choice of  $G_n \in \textup{conv}\{ F_n, F_{n+1}, \dots \}$ we have $G_n \to F$ in $L_1(\mu)$ or pointwise, respectively. 
\end{remark}

For $\nu, \rho \in \M_1(\R^d)$ we write $\rho \le \nu$ if $\nu - \rho$ is a positive measure.  $\mathcal{Q}(\mu, \le \nu)$ denotes the collection of unnormalized kernels that transport $\mu$ to a measure $\rho \le \nu$. Next, we show a compactness result for $\mathcal{Q}(\mu, \le \nu)$.

\begin{proposition}\label{prop:Qle_compact}
	Assume that $\nu$ has compact support.  Let $(q_n)_n$ be a sequence in $\mathcal{Q}(\mu,\le \nu)$. Then there are $r_n \in \mathrm{conv}\{q_n, q_{n+1},\dots \}$ and $r \in \mathcal{Q}(\mu,\le \nu)$ such that $r^x_n \to r^x$ weakly for $\mu$-a.e.\ $x$.  
\end{proposition}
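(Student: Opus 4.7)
The plan is to apply Koml\'os' lemma (Proposition~\ref{prop:komlos}) fibrewise along a countable dense family of continuous test functions and then identify the limit of the resulting convex combinations as a kernel in $\mathcal{Q}(\mu, \le \nu)$. First, fix a compact set $K \supset \mathrm{supp}(\nu)$. Since $\int q_n^x(\R^d \setminus K)\, \mu(dx) \le \nu(\R^d \setminus K) = 0$ for every $n$, after discarding a countable union of $\mu$-null sets one may assume that $q_n^x$ is supported in $K$ for every $n$ and every $x$ in a full-measure set. Next, choose a countable family $(f_k)_{k \ge 0} \subset C(K)$ that is dense in $C(K)$ and contains the constant function $1$.

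The second step is an iterative Koml\'os extraction combined with a diagonal argument. Setting $q_n^{(0)} := q_n$, at step $k$ the sequence $F_n^{(k)}(x) := \int f_k\, dq_n^{(k-1),x}$ is bounded in $L_1(\mu)$ by $\|f_k\|_\infty\, \nu(\R^d)$, so Proposition~\ref{prop:komlos} furnishes convex combinations $q_n^{(k)} \in \mathrm{conv}\{q_n^{(k-1)}, q_{n+1}^{(k-1)}, \ldots\}$ with $\int f_k\, dq_n^{(k),x} \to G_k(x)$ $\mu$-almost surely; by Remark~\ref{remark:komlos} the $\mu$-a.s.\ convergence established at previous steps is preserved. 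The diagonal $r_n := q_n^{(n)}$ then lies in $\mathrm{conv}\{q_n, q_{n+1}, \ldots\}$ and satisfies $\int f_k\, dr_n^x \to G_k(x)$ $\mu$-a.s.\ for every $k$. For $\mu$-almost every $x$, the measures $(r_n^x)_n$ are supported in the compact set $K$ with total masses tending to $G_0(x) < \infty$, hence form a tight and bounded family; Prokhorov's theorem together with density of $(f_k)$ in $C(K)$ forces every subsequential weak limit to coincide with a single measure $r^x$, so the full sequence $r_n^x$ converges weakly to $r^x$.

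What remains is to verify that $r$ is an unnormalized kernel with $\int r^x\, \mu(dx) \le \nu$. Measurability of $x \mapsto \int f\, dr^x$ for $f \in C(K)$ is inherited from the $\mu$-a.s.\ limit, and a monotone class argument extends this to $x \mapsto r^x(A)$ for Borel $A$. For any nonnegative $f \in C_c(\R^d)$, Fatou's lemma together with $\int r_n^x\, \mu(dx) \le \nu$ yields
\[
\int\!\!\int f\, dr^x\, \mu(dx) \le \liminf_{n \to \infty} \int\!\!\int f\, dr_n^x\, \mu(dx) \le \int f\, d\nu,
\]
which gives $\int r^x\, \mu(dx) \le \nu$ as measures. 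The main obstacle is to parlay the one-dimensional Koml\'os extraction into simultaneous $\mu$-a.s.\ convergence along the whole countable dense family $(f_k)$ and to upgrade this, via the compactness of $\mathrm{supp}(\nu)$, into genuine weak convergence of the fibre measures $r_n^x$ while keeping the limit $r$ measurable in $x$.
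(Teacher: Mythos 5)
Your proposal is correct and follows essentially the same route as the paper: an iterated Komlós extraction along a countable dense family in $C(K)$ containing the constant $1$, a diagonal sequence justified by Remark~\ref{remark:komlos}, Prokhorov's theorem plus uniqueness of the limit to get $\mu$-a.s.\ weak convergence of the fibres, and Fatou's lemma to verify $\int r^x\,\mu(dx)\le\nu$. The only difference is cosmetic: you additionally spell out the measurability of $x\mapsto r^x(A)$ and the restriction of the fibres to $K$, points the paper leaves implicit.
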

\begin{proof}
	Denote $K:= \textup{supp}(\nu)$ and pick a sequence $(f_i)_{i\in \N}$ in $C(K)$ such that $0 \le f_i \le 1$ for all $i \in \N$ and that the linear span of $(f_i)_{i\in \N}$ is dense in $C(K)$ w.r.t.\ the supremum norm. Moreover, assume that {$f_0=0$ and} $f_1 =1$.
	
	For a sequence of unnormalized kernels $(r^x_n)_n$ to converge weakly for $\mu$-a.e.\ $x$ the integrals $\int f_i \, dr^x_n$  must  converge for $\mu$-a.e $x$. As a first step to construct $(r_n)_n$ we inductively construct auxiliary sequences of unnormalized kernels $(p_{k,n})_n$ for which $ \lim_n \int f_i \, dp^x_{k,n}$ exists for $\mu$-a.e.\ $x$ for all $i \leq k$. In a second step, we show that the diagonal sequence $r_n=p_{n,n}$ converges weakly $\mu$-a.s.\ to some unnormalized kernel $r$.

	\emph{Step 1: }We show by induction on $k$ that there is a family of functions $(G_k)_k$ in $L_1(\mu)$ and unnormalized kernels $p_{k,n} \in \textup{conv}\{q_j: j\ge n\}$ such that  $ \int f_i \,dp_{k,n}^x \to G_i(x)$ as $n \to \infty$ for $\mu$-a.e.\ $x$ and all $ i \leq k$.
	
	In the base case $k=0${, we set $p_{0,n} := q_n$ and have nothing to prove since $f_0=0$.}
	
	Assume the claim is true for $k$, i.e.\   there are functions $G_0, \dots, G_{k} \in L_1(\mu)$ and kernels $p_{k,n} \in \textup{conv}\{q_j: j\ge n\}$ such that $ \int f_i \,dp_{k,n} \to G_i$ $\mu$-a.s.\  as $n \to \infty$ for all $i \le k$. We write  $F_{k+1,n}(x) = \int f_{k+1} \,dp^x_{k,n}$ and note  that $||F_{k+1,n}||_{L_1(\mu)} \le \nu(\R^d)$. Applying Proposition~\ref{prop:komlos} to the sequence $(F_{k+1,n})_n$ yields a function $G_{k+1} \in L_1(\mu)$ and functions $ G_{k+1,n} \in \textup{conv}\{F_{k+1,j}: j \ge n\}$ such that $G_{k+1,n} \to G_{k+1}$ $\mu$-a.s.
	
	We can explicitly write out these convex combinations as $G_{k+1,n} = \sum_{j=n}^\infty \lambda_{n,j} F_{k+1,j}$. Note that these sums are finite, i.e.\   for given $n$ only finitely many $\lambda_{n,j}$ are non-zero. We can use these weights to define $p_{k+1,n}$ as convex combination of $\{p_{k,j} : j \ge n \}$, i.e.\   we set  $p_{k+1,n} := \sum_{j=n}^\infty \lambda_{n,j} p_{k,j}$. Considering Remark~\ref{remark:komlos} it is easy to see that $ \lim_n \int f_i \,dp_{k+1,n}^x = G_i(x)$ for $\mu$-a.e.\ $x$ and all $i \le k+1$. 
	
	\emph{Step 2:} There are kernels $r_n \in \textup{conv}\{q_j:j\ge n\}$ and a kernel $r$ such that $r_n^x \to r^x$ weakly for $\mu$-a.e.\ $x$.
	
	To see this, we pick the diagonal sequence $r_n := p_{n,n}$ and note that, again recalling Remark~\ref{remark:komlos}, $\int f_i \,dr_n \to G_i$ $\mu$-a.s. for all $i \in \N$. This implies that the sequence $(r_n^x)_n$ can have at most one weak limit point because every weak limit point $r^x$ has to satisfy $\int f_i \,dr^x = G_i(x)$ for all $i \in \N$. 
	
	As $f_1=1$, we have $r^x_n(\R^d) \to G_1(x)< \infty $ $\mu$-a.s., i.e.\  $(r_n^x)_n$ is a sequence of positive measures with bounded mass concentrated on the compact set $K$, so there exists a limit point $r^x$ by Prokhorov's theorem. 
	
	Denote $\rho := \int r^x\, \mu(dx)$. In order to show that $\rho \le \nu$, it suffices to check that $\int f_i \,d\rho \le \int f_i \,d\nu$ for all $i \in \N$. As $\mathcal{Q}(\mu, \le \nu)$ is convex and $q_n \in \mathcal{Q}(\mu,\le \nu)$ for all $n \in \N$, we have $r_n \in \mathcal{Q}(\mu,\le\nu)$ for all $n \in \N$ as well. Together with the fact that $r_n^x \to r^x$ weakly for $\mu$-a.e.\ $x$, Fatou's lemma yields for all $i \in \N$
	\[
	\int f_i \,d\rho =\iint f_i \,dr^x \mu(dx) = \int \lim_n \int f_i \,dr^x_n \mu(dx) \le \liminf_n \iint f_i \,dr^x_n \mu(dx) \le \int f_i \,d\nu. \qedhere
	\]
\end{proof}

Now we can establish {the following attainment result:}

\begin{proposition}\label{prop:BarC}
	Let $\mu, \nu \in \M_1(\R^d)$ and assume that $\nu$ has compact support. If $\mathrm{BarC}(\mu,\nu)=0$, then $\Qm(\mu,\nu) \neq \emptyset$.
\end{proposition}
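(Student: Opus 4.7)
The plan is to extract a candidate kernel from a minimizing sequence of the barycentric cost via Proposition~\ref{prop:Qle_compact}, then patch up the fact that the limit a priori only transports $\mu$ to a sub-measure of $\nu$ by adding back the missing mass in a way that preserves the moment condition.

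First I would pick $q_n \in \mathcal{Q}(\mu,\nu)$ with $\int |x - \ba(q_n^x)|\,\mu(dx) \to 0$; such a sequence exists because $\textrm{BarC}(\mu,\nu)=0$ in particular implies $\mathcal{Q}(\mu,\nu) \neq \emptyset$. Viewing $(q_n)$ inside $\mathcal{Q}(\mu, \le \nu)$, Proposition~\ref{prop:Qle_compact} yields convex combinations $r_n \in \textup{conv}\{q_j : j \ge n\}$ and a kernel $r \in \mathcal{Q}(\mu, \le \nu)$ with $r_n^x \to r^x$ weakly for $\mu$-a.e.\ $x$. By convexity of $\mathcal{Q}(\mu,\nu)$ one still has $r_n \in \mathcal{Q}(\mu,\nu)$, and by Jensen's inequality applied to $|\cdot|$ the sequence $(r_n)$ remains minimizing. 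All measures involved are concentrated on the compact set $K := \textup{supp}(\nu)$, on which the coordinate maps are bounded and continuous, so weak convergence gives $\ba(r_n^x) \to \ba(r^x)$ pointwise $\mu$-a.e.; combined with a $\mu$-a.e.\ convergent subsequence of the $L^1(\mu)$-convergence $\ba(r_n^x) \to x$, this forces $\ba(r^x) = x$ for $\mu$-a.e.\ $x$.

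The main obstacle is that Proposition~\ref{prop:Qle_compact} only delivers $r \in \mathcal{Q}(\mu, \le \nu)$: the intermediate marginal $\rho := \int r^x\,\mu(dx)$ may fall short of $\nu$ by some $\sigma := \nu - \rho \ge 0$. The key observation is that $\ba(\sigma) = 0$: integrating $\ba(r^x)=x$ against $\mu$ gives $\ba(\rho)=\ba(\mu)$, while $\int \ba(q_n^x)\,\mu(dx)=\ba(\nu)$ together with the $L^1(\mu)$-convergence $\ba(q_n^x)\to x$ gives $\ba(\mu) = \ba(\nu)$; hence $\ba(\sigma) = \ba(\nu) - \ba(\rho) = 0$. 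Excluding the trivial case $\mu = 0$ (where $\nu = 0$ as well and the zero kernel works), the corrected kernel $\tilde r^x := r^x + \sigma / \mu(\R^d)$ satisfies $\int \tilde r^x \,\mu(dx) = \rho + \sigma = \nu$ and $\ba(\tilde r^x) = \ba(r^x) + \ba(\sigma)/\mu(\R^d) = x$ for $\mu$-a.e.\ $x$, so $\tilde r \in \mathcal{Q}_{\textup{m}}(\mu,\nu)$. The crux of the argument is this zero-barycenter property of the defect $\sigma$, which lets one spread the missing mass uniformly over $\mu$ without spoiling the moment condition.
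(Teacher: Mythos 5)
Your proposal is correct and follows essentially the same route as the paper: take a minimizing sequence, extract a $\mu$-a.e.\ weakly convergent sequence of convex combinations via Proposition~\ref{prop:Qle_compact} (using Remark~\ref{remark:komlos} to see it remains minimizing), deduce $\ba(r^x)=x$ $\mu$-a.s., and repair the missing mass by adding back the defect $\sigma$ after checking $\ba(\sigma)=0$. The only cosmetic difference is that you establish $\ba(\sigma)=0$ by first noting $\ba(\mu)=\ba(\nu)$ and $\ba(\rho)=\ba(\mu)$, whereas the paper exchanges limit and integral to show $\ba(\nu_1)=\ba(\nu)$ directly; these are the same computation rearranged.
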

\begin{proof}
	Let $q_n \in \mathcal{Q}(\mu,\nu)$ be a minimizing sequence, i.e.\  $\int |x-\ba(q_n^x)|\, \mu(dx) \to 0$. By Proposition~\ref{prop:Qle_compact} there is a sequence $(p_n)_n$ with $p_n \in \mathrm{conv}\{q_j : j\ge n\}$ and an unnormalized kernel $p \in \mathcal{Q}(\mu,\le \nu)$ such that $p_n^x \to p^x$ weakly for $\mu$-almost all $x$. 
	
	Denote  $\nu_1 := \int p^x \mu(dx)$. We show that $\ba(\nu_1)=\ba(\nu)$. First note that $\mathcal{Q}(\mu,\nu)$ is convex and $q_n \in \mathcal{Q}(\mu,\nu)$, so we have $p_n \in \mathcal{Q}(\mu,\nu)$ as well.
	
	As the function $f(x)=x$ is bounded on $\textup{supp}(\nu)$, we have $\ba(p_n^x) \to \ba(p^x)$ $\mu$-a.s. The condition that $(q_n)_n$ is a minimizing sequence means exactly that $||\ba(q^x_n)-x||_{L_1(\mu)} \to 0$. {By Remark~\ref{remark:komlos} we have $||\ba(p^x_n)-x||_{L_1(\mu)} \to 0$ as well. As the $L_1(\mu)$-limit and the $\mu$-a.s.-limit coincide if both exist, }we conclude that $\ba(p^x)=x$ $\mu$-a.s.\ and that the convergence $\ba(p_n^x) \to \ba(p^x)$ holds also in $L_1(\mu)$. Using this we find
	\[
	\int\! y \,\nu_1(dy) =\!\iint\! y \,p^x(dy) \mu(dx) = \!\int \!\lim_n\! \int\! y \,p^x_n(dy) \mu(dx) = \lim_n\! \iint\! y \,p^x_n(dy) \mu(dx) =\! \int\! y \,\nu(dy).
	\]
	Since $\nu_1 \le \nu$, the measure $\nu_2 := \nu-\nu_1$ is positive. Moreover, we have $\ba(\nu_2)=\ba(\nu)-\ba(\nu_1)=0$. We define the kernel $q$ as $q^x := p^x + \nu_2/\mu(\R^d)$. Then $q$ transports $\mu$ to $\nu_1+\nu_2=\nu$ and we have  $\ba(q^x)=\ba(p^x)+\ba(\nu_2 /\mu(\R^d))=x$. Hence, $q \in \Qm(\mu,\nu)$.
\end{proof}

Now we are ready to prove the main theorem:

\begin{proof}[Proof of Theorem \ref{thm:main}]
	By Proposition~\ref{prop:sphere} and Lemma~\ref{lemma:gluing} we can assume that all measures {involved} are concentrated on the unit sphere and hence compactly supported.
	{Suppose now} that $\mu \phc \nu$. By Proposition~\ref{prop:BarC} it suffices to show that $\mathrm{BarC}(\mu,\nu)=0$. To this end, we choose an approximating sequence $(\mu_n)_n$ for $\mu$ as in Lemma~\ref{lemma:approximation}. Using the triangle inequality for barycentric costs~\eqref{eq:gluingineq}, we find 
	\[
	\mathrm{BarC}(\mu,\nu) \leq \mathrm{BarC}(\mu,\mu_n) + \mathrm{BarC}(\mu_n,\nu).
	\]
	{The first term tends to zero as $n \to \infty$ by Lemma~\ref{lemma:approximation} and} the second term {vanishes} by Proposition~\ref{prop:mu_endlich} since $\mu_n \phc \mu \phc \nu$.
	
	For the reverse direction, {suppose that $\Qm(\mu,\nu) \neq \emptyset$. The same calculation as in \eqref{eq:jensen} shows that $\int f \, d\mu \le \int f \, d\nu$ for every support function $f : \R^d \to \R$.} 
\end{proof}

\section{Equivalence of Theorem~\ref{thm:strassen} and Theorem~\ref{thm:Strassen_Gozlan}(b)}
{The aim of this section is to show that the classical Strassen theorem in $\R^d$ is equivalent to the Strassen-type theorem for $\phc$ in the special case\footnote{%
		{This is precisely the setting of Theorem~\ref{thm:Strassen_Gozlan}(b). Let $\mu \phc \nu \in \mathcal{M}_1(\R^{d+1})$ such that the convex hull of $\mathrm{supp}(\nu)$ is compact and does not contain 0. After possibly rotating the coordinate system, we find that  $\mathrm{co}(\mathrm{supp}(\nu)) \subset (\R^d \times (0,\infty))\cup \{ 0 \}$. By Remark~\ref{rem:cospt}, the same is true for $\mu$. As removing a point mass in 0 preserves the $\eqph$-class, we can assume w.l.o.g.\ that $\mu$ is concentrated on $\R^d \times (0,\infty)$.}
	} that the measures are concentrated on $\R^d \times (0,\infty) \subset \R^{d+1}$. To this end, we identify $\R^d$ with the affine subspace  $\R^d \times \{1\} \subset \R^{d+1}$ using the embedding $\iota: \R^d \to \R^{d+1} : x \mapsto (x,1)$. The notion of perspective function (see e.g.\ \cite[Section~2]{Co18})  provides a one-to-one correspondence between convex functions on $\R^d$ and support functions on $\R^{d} \times (0,\infty)$.

\begin{definition}
	The perspective of a convex function $f: \R^d \to \R$ is the l.s.c.\ support function
	$\widehat{f}: \R^{d+1}\to \R \cup \{+\infty\}$ defined by

	\[
	\widehat{f}(x,\eta)=
	\begin{cases} 
		\eta f(x/\eta) &  \eta > 0,\\
		\lim\limits_{\alpha \to 0^+} \alpha f(x/\alpha ) & \eta=0, \\
		+\infty & \eta < 0.
	\end{cases} 
	\]
\end{definition}
We have $\widehat{f} \circ \iota = f$ for every convex function $f : \R^d \to \R$.  Hence, for $\mu,\nu \in \prob(\R^d)$ 
\begin{align}\label{eq:cxphc1}
	\mu \cx \nu \quad \text{if and only if} \quad \iota_\#\mu \phc \iota_\#\nu.
\end{align}

\begin{proof}[Proof that Theorem~\ref{thm:Strassen_Gozlan}(b) implies Theorem~\ref{thm:strassen}]
	Let $\mu \cx \nu \in \prob(\R^d)$. By \eqref{eq:cxphc1}, we have $\iota_\#\mu \phc \iota_\#\nu$, so by Theorem~\ref{thm:Strassen_Gozlan}(b) there is a kernel $q\in \Qm(\iota_\#\mu , \iota_\#\nu)$. Let $\pr_{\R^d} : \R^{d+1} \to \R^d$ be the projection onto the first $d$ components. Then the kernel defined by $p^x:=\pr_{\R^d\#}q^{\iota(x)}$ is in $\Qm(\mu,\nu)$. To conclude that $p$ is a martingale kernel, it suffices to show that  $p^x(\R^{d})=1$ for $\mu$-a.e.\ $x$. As $q^{\iota (x)}$ is concentrated on $\R^d \times \{1\}$, we have  
	\[	
	p^x(\R^{d}) = q^{\iota(x)}(\R^{d+1}) = \int 1 \, q^{\iota(x)}(dy) = \int y_{d+1} \, q^{\iota(x)}(dy) =1 . \qedhere
	\]
\end{proof}
If $\mu \in \mathcal{M}_1(\R^{d+1})$ is concentrated on $\R^d \times (0,\infty)$, there is a representative of its $\eqph$-class that is supported on $\R^d \times \{1\}$. In order to construct such an element, set
\[
\widehat{\mu}:=\phi_\#(x_{d+1}\cdot\mu),
\]
where $ \phi(x_1, \dots, x_d, x_{d+1}) = (x_1/x_{d+1},\dots,x_d/x_{d+1}).
$
We then have $\int f \, d\widehat{\mu} = \int \widehat{f} \, d\mu$ for every convex function  $f : \R^d \to \R$. For every support function $f : \R^{d+1} \to \R$ we have $\widehat{f \! \circ \! \iota } =f $ on $\R^d \times (0,\infty)$ and hence 
\[
\int f \, d\iota_\#\widehat{\mu} =\int f\circ \iota \, d\widehat{\mu} = \int \widehat{f \! \circ \! \iota } \, d\mu= \int f \, d\mu.
\]
Therefore, we have  $\mu \eqph \iota_\#\widehat{\mu}$ by Corollary~\ref{cor:ph=phc}.

\begin{proof}[Proof that Theorem~\ref{thm:strassen} implies Theorem~\ref{thm:Strassen_Gozlan}(b)]
	Let $\mu \phc \nu \in \M_1(\R^{d+1}) $ both be concentrated on $\R^d \times  (0,\infty)$. Then $\int x_{d+1} \,\mu(dx) = \int x_{d+1}\, \nu(dx)$ and, by scaling $\mu$ and $\nu$, we can assume these integrals to be 1. In particular, we have $\widehat{\mu}, \widehat{\nu} \in \prob(\R^{d})$. 
	
	As $\mu \eqph \iota_\#\widehat{\mu}$ and  $\nu \eqph \iota_\#\widehat{\nu}$, we have $\iota_\#\widehat{\mu} \phc \iota_\#\widehat{\nu}$, moreover, Proposition~\ref{prop:char_ph_equiv} implies that $\Qm(\mu,\iota_\#\widehat{\mu}) \neq \emptyset$ and $\Qm(\iota_\#\widehat{\nu},\nu) \neq \emptyset$. So, using the gluing lemma, it suffices to find a moment-preserving kernel between $\iota_\#\widehat{\mu}$ and $\iota_\#\widehat{\nu}$.  
	
	By \eqref{eq:cxphc1} we have $\widehat{\mu} \cx \widehat{\nu}$, so by Theorem~\ref{thm:strassen} there exists a martingale kernel $p$ between $\mu$ and $\nu$.  Clearly, the kernel defined by $ q^{\iota(x)} :=\iota_\#p^x$ is in  $\Qm(\iota_\#\widehat{\mu},\iota_\#\widehat{\nu})$.
\end{proof}%
}





\medskip
\noindent
{\bf Acknowledgment:}
This research was funded in whole or in part by by the Austrian Science
Fund (FWF) through projects 10.55776/P35197 and 10.55776/P34743. For open access purposes, the author has applied a CC BY public copyright license to any author accepted manuscript version arising from this submission.	
	
Both authors thank the anonymous referees for their suggestions and remarks that significantly improved the presentation of this note and Mathias Beiglböck, Leo Brauner, and Gudmund Pammer for many helpful discussions and remarks.


\end{document}